\def\added{\relax}
 \renewcommand\section{\@startsection {section}{1}{\z@}%
 {-3.5ex \@plus -1ex \@minus -.2ex}%
 {2.3ex \@plus.2ex}%
 {\center \normalfont\large\bfseries}}
\newtheorem{thm}{Theorem}
\newtheorem{lem}[thm]{Lemma}
\newtheorem{defi}[thm]{Definition}
\newtheorem{remark}[thm]{Remark}
\newtheorem{example}[thm]{Example}
\newtheorem{pb}[thm]{Problem}
\newenvironment{rk}{\begin{remark}\rm}{\end{remark}}
\newcommand{\real}{{\mathbb R}}
\newcommand{\nat}{{\mathbb N}}
\newcommand{\com}{{\mathbb C}}
\newcommand{\M}{{\mathcal M}}
\newcommand{\Rad}{{{\rm Rad}}}
\renewcommand{\a}{\alpha}
\renewcommand{\b}{\beta}
\newcommand{\g}{\gamma}
\newcommand{\e}{\varepsilon}
\renewcommand{\l}{\lambda}
\renewcommand{\e}{\varepsilon}
\renewcommand{\O}{{\Omega}}
\renewcommand{\o}{{\omega}}
\newcommand{\s}{\sigma}
\renewcommand{\i}{{\rm i}}
\newcommand{\8}{\infty}
\newcommand{\el}{\ell}
\newcommand{\la}{\langle}
\newcommand{\ra}{\rangle}
\newcommand{\wt}{\widetilde}
\newcommand{\wh}{\widehat}
\newcommand{\n}{\noindent}
\newcommand{\pf}{\noindent{\it Proof.~~}}
\newcommand{\cqd}{\hfill$\Box$}
\newcommand{\be}{\begin{eqnarray*}}
\newcommand{\ee}{\end{eqnarray*}}
\newcommand{\beq}{\begin{equation}}
\newcommand{\eeq}{\end{equation}}
\begin{document}

\title{On the vector-valued  Littlewood-Paley-Rubio de Francia inequality}

\author{Denis Potapov\thanks{School of Mathematics and Statistics,
    University of NSW, Kensington NSW 2052, Australia (first two
    authors); E-mail: \texttt{d.potapov@unsw.edu.au}} \and Fedor
  Sukochev\thanks{E-mail: \texttt{f.sukochev@unsw.edu.au}} \and
  Quanhua Xu\thanks{School of Mathematics and Statistics, Wuhan
    University, Wuhan 430072, China and Laboratoire de
    Math\'ematiques, Universit\'e de Franche-Comt\'e, 25030 Besan\c
    con cedex, France; E-mail: \texttt{qxu@univ-fcomte.fr} }}

\date{}

\maketitle

\begin{abstract}
  {\added The paper studies Banach spaces satisfying the
    Littlewood-Paley-Rubio de Francia property~$LPR_p$, $2 \leq p <
    \infty$.  The paper shows that every Banach lattice whose
    $2$-concavification is a UMD Banach lattice has this property.
    The paper also shows that every space having~$LPR_q$ also
    has~$LPR_p$ with~$q \leq p < \infty$.

    MSC2000:46B20, 46B42, 46E30
  
    Keywords: Littlewood-Paley-Rubio de Francia inequality, UMD space
    of type 2, Banach lattices.}
\end{abstract}


\newcommand{\In}{{\mathcal{I}}}

\bibliographystyle{short}

\section{Introduction}

Let~$X$ be a Banach space and~$L^p(\real; X)$ be the Bochner space of
$p$-integrable $X$-valued functions on $\real$. If~$X = \com$, we
abbreviate~$L^p(\real; X) = L^p(\real)$, $1 \leq p < \infty$. For
every~$f \in L^1(\real; X)$, $\hat f$ stands for the Fourier
transform.  If~$I \subseteq \real$ is an interval, then~$S_I$ is the
Riesz projection adjusted to the interval~$I$, i.e., $$ S_I f(t) =
\int_I \hat f(s)\, e^{2\pi{\rm i}st}\, ds. $$ The following remarkable
inequality was proved by J.L.~Rubio de Francia in~\cite{R1985}.  For
every~$2 \leq p < \infty$, there is a constant~$c_p$ such that for
every collection of pairwise disjoint intervals~$\left( I_j \right)_{j
  = 1}^\infty$, the following estimate holds
\begin{equation}
  \label{RubioEstimate}
  \left\| \left( \sum_{j = 1}^\infty \left|  S_{I_j}
        f \right|^2 \right)^{\frac 12} \right\|_{L^p (\real)}
  \leq c_p\, \left\| f \right\|_{L^p (\real)} \,, \quad\forall\; f\in
  L^p(\real). 
\end{equation}

In this note, we shall discuss the version of the theorem above when
functions take values in a Banach space $X$.  Let~$\left(
  \varepsilon_k \right)_{k \geq 1}$ be the system of Rademacher
functions on~$[0, \,1]$.  {\added The space~$\Rad (X)$ is the closure
  in~$L^p([0, 1]; X)$, $1 \leq p < \infty$ of all $X$-valued functions
  of the form $$ g(\omega) = \sum_{k = 1}^n \varepsilon_k (\omega)\,
  x_k,\ \ x_k \in X,\ \ n \geq 1. $$ The above definition is
  independent of~$1 \leq p < \infty$.  It follows from the
  Khintchine-Kahane inequality (see~\cite{LT-II}).  In fact, the above
  fact is a consequence of a, so-called, {\it contraction
    principle\/}.  It states that, for every sequence of
  elements~$\left\{x_j \right\}_{j=1}^\infty \subseteq X$ and sequence
  of complex numbers~$\left\{\alpha_j \right\}_{j = 1}^\infty$ such
  that~$\left| \alpha_j \right| \leq 1$, $j \geq 1$, the following
  inequality holds $$ \left\| \sum_{j = 1}^\infty \alpha_j\,
    \epsilon_j\, x_j \right\|_{L^p(\real, \Rad(X))} \leq \, c_p \,
  \left\| \sum_{j =1}^\infty \epsilon_j\, x_j \right\|_{L^p(\real,
    \Rad(X))}. $$ We shall employ this principle on numerous occasions
  in this paper.}

Following~\cite{BGT2003}, we shall call~$X$ {\it a space with the
  $\hbox{LPR}_p$ property} with $2\le p<\8$, if there exists a
constant $c>0$ such that for any collection of pairwise disjoint
intervals~$\left\{I_j \right\}_{j =1}^\infty$ we have that
 \begin{equation}
  \label{LPRpEst}
  \left\| \sum_{j = 1}^\infty \varepsilon_j S_{I_j} f
  \right\|_{L^p(\real; \Rad (X))} \leq c\, \left\| f \right\|_{L^p(\real;
    X)} \,, \quad\forall\; f\in
 L^p(\real;X). 
 \end{equation}
 It was proved in~\cite{HTY2009} that every space with the
 $\hbox{LPR}_p$ property is necessarily UMD and of type~$2$. It is an
 open problem whether the converse is true. It is also unknown whether
 $\hbox{LPR}_p$ is independent of $p$. Note that Rubio de Francia's
 inequality says that $\com$ has the $\hbox{LPR}_p$ property for every
 $2\le p<\8$. By the Khintchine inequality and the Fubini theorem we
 see that any $L^p$-space with~$2 \leq p < \infty$ has the
 $\hbox{LPR}_p$ property. Using interpolation, we deduce that a
 Lorentz space $L^{p, r}$ has the $\hbox{LPR}_q$ property for some
 indices $p, r$ and $q$.  However, until recently there were no
 non-trivial examples of spaces with~$\hbox{LPR}_p$ found.
 
 If $X$ is a Banach lattice, estimate \eqref{LPRpEst} admits a
 pleasant form as in the scalar case:
 \begin{equation}
  \label{LPRpEstlat}
  \left\| \left( \sum_{j = 1}^\infty \left|  S_{I_j}
        f \right|^2 \right)^{\frac 12} \right\|_{L^p (\real; X)}
  \leq c\, \left\| f \right\|_{L^p (\real; X)} \,, \quad\forall\; f\in
 L^p(\real;X). 
 \end{equation}
 We shall show that if the $2$-concavification $X_{(2)}$ of~$X$ is a
 UMD Banach lattice, then \eqref{LPRpEstlat} holds for all $2<p<\8$,
 so $X$ is a space with the $\hbox{LPR}_p$ property. Recall that
 $X_{(2)}$ is the lattice defined by the following quasi-norm
 $$ \left\| f\right\|_{X_{(2)}} = \left\| \,
  \left| f \right|^{\frac 12} \right\|^2_{X}. $$ 
The space~$X_{(2)}$
is a Banach lattice if and only if $X$ is $2$-convex, i.e., 
 $$ \left\| \left(
    \sum_{j = 1}^n \left| f_j \right|^2 \right)^{\frac 12} \right\|_X
 \leq \left( \sum_{j = 1}^n \left\| f_j \right\|^2_X \right)^{\frac
  12}. $$
We refer to \cite{LT-II} for more information on Banach lattices.

We shall also show that if~$X$ is a Banach
space (not necessarily a lattice) with the $\hbox{LPR}_q$ property for some $q$, then $X$ has   the $\hbox{LPR}_p$ property for
every~$p \geq q$.

\section{Dyadic decomposition}
\label{sec:dyadic-decomposition}

For every interval~$I \subseteq \real$, let~$2I$ be the interval of
double length and the same centre as~$I$.  Let~$\In = \left( I_j
\right)_{j = 1}^\infty$ be a collection of pairwise disjoint
intervals.  We set~$2\In = \left( 2 I_j \right)_{j = 1}^\infty$.  The
collection~$\In$ is called {\it well-distributed\/} if 
there is a number~$d$ such that each element of~$2\In$ intersects at
most~$d$ other elements of~$2\In$.

In this section, we fix a pairwise disjoint collection of
intervals~$\left( I_j \right)_{j=1}^\infty$ and we break each
interval~$I_j$, $j \geq 1$ into a number of smaller dyadic
subintervals such that the new collection is well-distributed.  This
construction was employed in a number of earlier papers.

We start with two elementary remarks on estimate \eqref{LPRpEst} or
\eqref{LPRpEstlat}. Firstly, it suffices to consider a finite sequence
$(I_j)_j$ of disjoint finite intervals. Secondly, by dilation, we may
assume $|I_j|\geq 4$ for all $j$.  Thus all sums on $j$ and $k$ in
what follows are finite. Fix~$j \geq 1$.  Let $I_j=(a_j, b_j]$. Let
$n_j = \max\{n\in\nat:\; 2^{n+1}\leq b_j-a_j+4\} $. We first split
$I_j$ into two subintervals with equal length
 \[
 I_j^a=(a_j,\;\frac{a_j+b_j}2]\quad\mbox{and}
  \quad I_j^b=(\frac{a_j+b_j}2,\;b_j].
 \]
 Then we decompose $I_j^a$ and $I_j^b$ into relative dyadic
 subintervals as follows: \be
 I_j^a=\bigcup_{k=1}^{n_j}(a_{j,k},\;a_{j,k+1}]\quad\mbox{and}\quad
 I_j^b=\bigcup_{k=1}^{n_j}(b_{j,k+1},\; b_{j,k}], \ee where \be
 a_{j,k}&=&a_j-2+2^k,\quad 1\leq k\leq n_j \quad \mbox{and}\quad
 a_{j,n_j+1}=\frac{a_j+b_j}2; \\
 b_{j,k}&=&b_j+2-2^k, \quad 1\leq k\leq n_j \quad\mbox{and}\quad
 b_{j,n_j+1}=\frac{a_j+b_j}2.  \ee Let
 \[
 I_{j, k}^a=(a_{j,k},\;a_{j,k+1}],\quad I_{j, k}^b=(b_{j,k+1},\;b_{j,k}]
\]
for $1\leq k\leq n_j$ and let $I_{j, k}^a,\, I_{j, k}^b$ be the empty
set for the other $k $'s. Also put
 \[
 \tilde I^a_{j, n_j}=(a_j-2+2^{n_j},\;
 a_j-2+2^{n_j+1}]\quad\mbox{and}\quad
 \tilde I^b_{j, n_j}=(b_j+2-2^{n_j+1},\;b_j+2-2^{n_j}].
 \]

 \begin{lem}
   \label{ToDydic}
   A Banach space~$X$ has the $\hbox{LPR}_p$ property if there is a
   constant~$c > 0$ such that
   \begin{equation}
     \label{a}
     \max_{u = a, b} \left\| \sum_{j = 1}^\infty \e_j \sum_{k = 1}^{n_j}
       \e'_k S_{I_{j, k}^u}f \right\|_{L^p(\real; \Rad_2 (X))} \leq c \, 
     \|f\|_{L^p(\real; X)} \,, \quad\forall\; f\in
 L^p(\real;X),
   \end{equation}
   where~$\Rad_2( X) = \Rad (\Rad' (X))$ and~$\Rad' (X)$ is the space
   with respect to another copy of the Rademacher system~$\left(
     \varepsilon'_k \right)_{k \geq 1}$.
 \end{lem}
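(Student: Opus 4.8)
The plan is to recover the $\hbox{LPR}_p$ inequality \eqref{LPRpEst} for the given collection $(I_j)_j$ from \eqref{a} by a Fourier-multiplier argument that ``erases'' the auxiliary Rademacher system $(\varepsilon'_k)$. Since $S_{I_j}=S_{I_j^a}+S_{I_j^b}$ and $S_{I_j^u}=\sum_{k=1}^{n_j}S_{I_{j,k}^u}$, the triangle inequality in $L^p(\real;\Rad(X))$ reduces matters to proving, for $u=a$ and $u=b$ separately,
$$
\Big\|\sum_{j}\varepsilon_j\sum_{k}S_{I_{j,k}^u}f\Big\|_{L^p(\real;\Rad(X))}\le c'\,\|f\|_{L^p(\real;X)}.
$$
Fix $u$, write $g=\sum_{j}\varepsilon_j\sum_{k}\varepsilon'_k S_{I_{j,k}^u}f$ for the function occurring in \eqref{a}, and $h=\sum_{j}\varepsilon_j\sum_{k}S_{I_{j,k}^u}f$ for the function to be controlled.

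The key observation is the following. For $\omega'\in[0,1]$ let $T_{\omega'}$ be the Fourier multiplier on $\real$ whose symbol equals $\varepsilon'_k(\omega')$ on each $I_{j,k}^u$ and vanishes off $\bigcup_j I_j^u$; it is $\{-1,0,1\}$-valued. Denote by $g_{\omega'}\in L^p(\real;\Rad(X))$ the $\omega'$-section of $g$, that is, $g_{\omega'}(t)(\omega):=g(t)(\omega)(\omega')$. Then $T_{\omega'}g_{\omega'}=h$ for \emph{every} $\omega'$: the $(j,k)$-block of $\widehat{g_{\omega'}}$ is supported in $I_{j,k}^u$ and carries there the scalar factor $\varepsilon_j(\omega)\varepsilon'_k(\omega')$, and multiplication by the symbol $\varepsilon'_k(\omega')$ on $I_{j,k}^u$ turns it into $\varepsilon_j(\omega)$ because $\varepsilon'_k(\omega')^2=1$; the disjointness of the $I_{j,k}^u$ rules out interference between blocks, so the result is exactly $\widehat h$, with no residual dependence on $\omega'$.

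Granting for the moment that $C:=\sup_{\omega'\in[0,1]}\|T_{\omega'}\|_{L^p(\real;X)\to L^p(\real;X)}<\infty$, uniformly over all the collections $(I_{j,k}^u)$ produced in Section~\ref{sec:dyadic-decomposition}, the lemma follows. Since $T_{\omega'}$ acts only in the $t$-variable, its norm on $L^p(\real;\Rad(X))$ — a closed, $T_{\omega'}$-invariant subspace of $L^p(\real;L^p([0,1];X))$, on which Fubini identifies the multiplier norm with that on $L^p(\real;X)$ — is at most $C$. Hence $\|h\|_{L^p(\real;\Rad(X))}=\|T_{\omega'}g_{\omega'}\|_{L^p(\real;\Rad(X))}\le C\,\|g_{\omega'}\|_{L^p(\real;\Rad(X))}$ for each $\omega'$; raising to the $p$-th power, integrating in $\omega'\in[0,1]$, and using the Fubini identity $\int_0^1\|g_{\omega'}\|_{L^p(\real;\Rad(X))}^p\,d\omega'=\|g\|_{L^p(\real;\Rad_2(X))}^p$, we get $\|h\|_{L^p(\real;\Rad(X))}\le C\,\|g\|_{L^p(\real;\Rad_2(X))}\le Cc\,\|f\|_{L^p(\real;X)}$ by \eqref{a}. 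Adding the estimates for $u=a$ and $u=b$ gives \eqref{LPRpEst} with constant $2Cc$.

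The remaining point — the uniform multiplier bound $\sup_{\omega'}\|T_{\omega'}\|_{L^p(\real;X)\to L^p(\real;X)}<\infty$ — is the heart of the matter and the step I expect to be hardest. Notice first that \eqref{a} applied to a collection consisting of a single interval of length $4$ already gives that every (short, hence by dilation every) interval projection $S_I$ is bounded on $L^p(\real;X)$, so $X$ must be a UMD space and the vector-valued Marcinkiewicz / Littlewood--Paley calculus is available. The entire point of the construction of Section~\ref{sec:dyadic-decomposition} is to arrange the symbols of the $T_{\omega'}$ into a class of multipliers whose bound depends only on $X$ and $p$: because the sign attached to $I^u_{j,k}$ is $\varepsilon'_k(\omega')$, depending only on the level $k$, the symbol regroups as $\sum_k\varepsilon'_k(\omega')\,\mathbf 1_{\,\bigcup_j I^u_{j,k}}$; on each $I_j^u$ it becomes, after a modulation sending $a_j$ (resp.\ $b_j$) to the origin, a standard lacunary step function defining a bounded multiplier uniformly in $j$ and $\omega'$; and patching these pieces across $j$ into a single bounded multiplier is exactly where the well-distributedness of $(I^u_{j,k})_{j,k}$ (bounded overlap of the dilates $2I^u_{j,k}$) is used. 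Making this patching quantitative, so that $C$ is genuinely independent of the collection, is the technical core of the argument, and is the part I would expect to require the most care.
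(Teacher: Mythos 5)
Your plan parallels the paper's up to the key step, but the step you flag as the ``heart of the matter'' --- the uniform bound $\sup_{\omega'}\|T_{\omega'}\|_{L^p(\real;X)\to L^p(\real;X)}<\infty$ --- is a genuine gap, and the route you sketch for closing it does not work. The symbol of $T_{\omega'}$ is $\sum_{j,k}\e'_k(\omega')\chi_{I^u_{j,k}}$; on a single $I_j^u$ it is a translate of a fixed lacunary step function, but globally it is \emph{not} a Marcinkiewicz multiplier: a dyadic block $[2^n,2^{n+1})$ may contain arbitrarily many of the disjoint intervals $I_j^u$, and the symbol jumps at each of their endpoints, so its variation over dyadic blocks is unbounded. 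Well-distributedness controls the overlap of the dilated intervals $2I^u_{j,k}$, not how many of the $I_j^u$ sit inside a fixed dyadic block, so it cannot underwrite the ``patching'' you invoke. Nor can the $j$-dependent ``modulation sending $a_j$ to the origin'' be effected at the level of the symbol alone: a bound for $T_{\omega'}$ on all of $L^p(\real;X)$ amounts to unconditionality of the decomposition $g\mapsto(S_{E_k}g)_k$ with $E_k=\bigcup_j I^u_{j,k}$, and for such scattered unions of intervals that is itself a Rubio de Francia type statement, not something one has a priori.

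The paper avoids this by \emph{not} reducing to an operator bound on $L^p(\real;X)$. It keeps the outer randomization $\e_j$ in play and applies the contraction principle, pointwise in $t$, to insert the $j$-dependent unimodular factors $\exp(-2\pi\i a_j t)$; this is legitimate only because the $\e_j$'s are there to absorb the scalar. \emph{After} this modulation the $k$-th block has Fourier support in the $j$-independent shifted-dyadic interval $(2^k-2,2^{k+1}-2]$, so the modulated $\Rad(X)$-valued function has a genuine Littlewood--Paley structure near the origin and Bourgain's vector-valued multiplier theorem applies directly to that function; a second contraction then removes the modulation. Your passage to $\sup_{\omega'}\|T_{\omega'}\|_{L^p(\real;X)\to L^p(\real;X)}$ via Fubini throws away the $\e_j$-randomization at exactly the point where it is indispensable, which is why the remaining step cannot be completed along the lines you describe.
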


 {\added Observe that if~\eqref{a} holds, for every family of
   intervals~$\left( I_j \right)_{j = 1}^\infty$, then $X$ is a UMD
   space.  Indeed,~(\ref{a}) implies that $$ \left\| S_{I^u_{j, k}}
     f\right\|_{L^p(\real, X)} \leq c\, \left\| f\right\|_{L^p(\real,
     X)},\ \ u = a, b,\ j\geq 1,\ 1 \leq k \leq n_j. $$ That is, by
   adjusting the choice of intervals, it implies that every
   projection~$S_I$ is bounded on~$L^p(\real, X)$ and $$ \sup_{I
     \subseteq \real} \left\| S_I \right\|_{L^p(\real, X) \mapsto
     L^p(\real, X)} < + \infty. $$ The latter is equivalent to the
   fact that~$X$ is UMD (see~\cite{Bo1986})}.

\begin{proof}
Let $f\in L^p(\real; X)$.
Then
\begin{equation*}
  \left\|\sum_{j =1 }^\infty \e_jS_{I_j} f\right\|_{L^p(\real; \Rad (X))}
  \le \left\|\sum_{j = 1}^\infty \e_j S_{I_j^a}f\right\|_{L^p(\real; \Rad (X))} +
  \left\|\sum_{j = 1}^\infty \e_jS_{I_j^b}f\right\|_{L^p(\real; \Rad (X))}\,.
\end{equation*}
Using the subintervals $I_{j, k}^a$ and {\added the contraction
  principle}, we write
\begin{eqnarray*}
  \left\|\sum_{j = 1}^\infty \e_j S_{I_j^a}f\right\|_{L^p(\real; \Rad (X))}
  & = & \left\|\sum_{j =1}^\infty \sum_{k = 1}^{n_j}
    \e_j S_{I_{j, k}^a}f\right\|_{L^p(\real; \Rad (X))} \\  
  & \sim & \left\|\sum_{j = 1}^\infty \sum_{k = 1}^{n_j} \e_j\exp(-2\pi\i
    a_j\,\cdot)S_{I_{j, k}^a}f\right\|_{L^p(\real; \Rad (X))}. 
\end{eqnarray*}
Note that
 \[
 \exp(-2\pi\i  a_j\,\cdot)S_{I_{j, k}^a}f =S_{I_{j, k}^a-a_j}[\exp(-2\pi\i
 a_j\,\cdot)f]
 \]
and
 \[
 I_{j, k}^a - a_j=(2^k-2,\; 2^{k+1}-2],\ k<n_j; \quad
 I_{j, n_j}^a-a_j\subseteq (2^{n_j}-2,\; 2^{n_j+1}-2].
 \]
 {\added Recall that $X$ is a UMD space.}  Therefore, applying
 Bourgain's Fourier multiplier theorem (see~\cite{Bo1986}) to the
 function
 \[
 \sum_{j = 1}^\infty \sum_{k = 1}^{n_j} \e_j\exp(-2\pi\i
 a_j\,\cdot)S_{I_{j, k}^a}f \in L^p(\real;\Rad (X))),
 \]
 we obtain {\added (the contraction principle being used in the last
   step)}
\begin{multline*}
  \left\| \sum_{j = 1}^\infty \sum_{k = 1}^{n_j} \e_j \exp(-2\pi\i
    a_j\,\cdot)S_{I_{j, k}^a} f \right\|_{L^p(\real; \Rad (X))}
  \sim \\
  \left\|\sum_{j = 1}^\infty \sum_{k = 1}^{n_j} \e_j\e_k'
    \exp(-2\pi\i a_j\,\cdot) S_{I_{j, k}^a} f \right\|_{L^p(\real; \Rad_2( X))}
  \\ \sim \left\| \sum_{j = 1}^\infty \sum_{k = 1}^{n_j} \e_j\e_k'
    S_{I_{j, k}^a} f \right\|_{L^p(\real; \Rad_2( X))}.
\end{multline*}
Similarly,
\[
\left\|\sum_{j = 1}^{\infty} \e_j S_{I_j^b}f\right\|_{L^p(\real; \Rad
  X)} \sim \left\|\sum_{j = 1}^{\infty} \sum_{k = 1}^{n_j}
  \e_j\e'_kS_{I_{j, k}^b}f\right\|_{L^p(\real; \Rad_2( X))}\,.
 \]
 Combining the preceding estimates, we get
 \begin{multline*}
   \left\|\sum_{j = 1}^{\infty} \e_j S_{I_j}f\right\|_{L^p(\real; \Rad
     X)} \le c_p \, \left[ \left\|\sum_{j = 1}^{\infty} \sum_{k =
         1}^{n_j} \e_j\e'_kS_{I_{j, k}^a}f\right\|_{L^p(\real; \Rad_2(
       X))}  \right. \\ \left. + \left\|\sum_{j = 1}^{\infty} \sum_{k = 1}^{n_j}
       \e_j\e'_kS_{I_{j, k}^b}f\right\|_{L^p(\real; \Rad_2( X))}
   \right].
 \end{multline*}
\end{proof}

{\added Let us observe that, if $X$ is a UMD space, the argument in
  the proof above shows that $$ \left\|\sum_{j = 1}^{\infty} \e_j
    S_{I_j}f\right\|_{L^p(\real; \Rad X)}\lesssim \max_{u = a, b}
  \left\| \sum_{j = 1}^\infty \e_j \sum_{k = 1}^{n_j} \e'_k S_{I_{j,
        k}^u}f \right\|_{L^p(\real; \Rad_2 (X))} \,. $$ Moreover, the
  argument can be reversed to show the opposite estimate (see the
  proof of~(\ref{RemarkIIi-op}) below.) This observation is summarised
  in the following remark.}

\begin{rk}
  \label{DyadicLattice}
  \begin{enumerate}[i)]
  
  \item If $X$ is a UMD space, then $$ \left\|\sum_{j = 1}^{\infty}
      \e_j S_{I_j}f\right\|_{L^p(\real; \Rad X)}\lesssim \max_{u = a,
      b} \left\| \sum_{j = 1}^\infty \e_j \sum_{k = 1}^{n_j} \e'_k
      S_{I_{j, k}^u}f \right\|_{L^p(\real; \Rad_2 (X))} \,. $$
  
  \item If\/~$\In = \left( I_j \right)_{j \geq 1}$ is a collection of
    pairwise disjoint intervals and~$\In_u = \left( I^u_{j, k}
    \right)_{j \geq 1, 1 \leq k \leq n_j}$, $u = a, b$, then both
    collections~$\In_a$ and~$\In_b$ are well-distributed.

  \item {\added If~$X$ is a Banach lattice then it has the
      $\alpha$-property (see~\cite{PisierAlph}).  That is,} $$ \left\|
      \sum_{j, k = 1}^\infty \varepsilon_j \varepsilon'_k x_{jk}
    \right\|_{\Rad_2( X)} \sim \left\| \sum_{j, k = 1}^\infty
      \varepsilon_{jk} x_{jk} \right\|_{\Rad (X)}\,,$$ where
    $(\varepsilon_{jk})$ is an independent family of Rademacher
    functions.
    
  \item The above two observations imply that if~$X$ is a Banach
    lattice, then it has the $\hbox{LPR}_p$ property if and only if 
    estimate~(\ref{LPRpEst}) holds for every well-distributed
    collection of intervals~$\In$.
  \end{enumerate}
 \end{rk}

\section{LPR-estimate for Banach lattices}
\label{sec:LPR-lattices}

\begin{thm}
  \label{RubioForLattices}
  If $X$ is a Banach lattice such that~$X_{(2)}$ is a UMD {\added
    Banach} space, then~$X$ has the $\hbox{LPR}_p$ property for
  every~$2 < p < \infty$.
\end{thm}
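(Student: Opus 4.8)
The plan is to linearise the square function by passing to the $2$-concavification $X_{(2)}$, and then to transfer Rubio de Francia's scalar inequality~\eqref{RubioEstimate} to the lattice-valued setting, the UMD property of $X_{(2)}$ entering mainly through a lattice-valued Hardy--Littlewood maximal inequality for $X_{(2)}^*$.

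I would begin with structural remarks. Since $X_{(2)}$ is a UMD Banach lattice it is $r$-convex and $s$-concave for some $1<r\le s<\infty$; consequently $X_{(2)}$ and $X$ (the $2$-convexification of $X_{(2)}$, so that $\|g\|_X=\bigl\||g|^2\bigr\|_{X_{(2)}}^{1/2}$) are order-continuous, $X$ is UMD (being a convexification of the UMD lattice $X_{(2)}$), and the K\"othe dual $X_{(2)}^*$ is again a UMD Banach lattice. Realising $X$ as a K\"othe function space over a $\sigma$-finite measure space $(\Omega,\mu)$, I record: (a) the Riesz projections act slicewise, $(S_If)(t)(\omega)=\bigl(S_I[f(\cdot)(\omega)]\bigr)(t)$ for $f$ in a dense class; and (b) for every $1<q<\infty$, any fixed iterate of the one-dimensional Hardy--Littlewood maximal operator, acting slicewise (equivalently, as a lattice-valued maximal operator), is bounded on $L^q(\real;X_{(2)}^*)$, a well-known property of UMD Banach lattices. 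Since $X$ is a lattice, the $\hbox{LPR}_p$ property is, by the introduction, estimate~\eqref{LPRpEstlat} (and by Remark~\ref{DyadicLattice}(iv) it would even suffice to treat well-distributed collections, though the argument below does not use this). Squaring~\eqref{LPRpEstlat} and using $\|g\|_{L^p(\real;X)}^2=\bigl\||g|^2\bigr\|_{L^{p/2}(\real;X_{(2)})}$, the goal becomes
\[
\Bigl\|\sum_{j}|S_{I_j}f|^2\Bigr\|_{L^{p/2}(\real;X_{(2)})}\ \lesssim\ \bigl\||f|^2\bigr\|_{L^{p/2}(\real;X_{(2)})},\qquad 2<p<\infty,
\]
for every finite family $(I_j)$ of pairwise disjoint intervals.

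Since $p/2>1$ and $X_{(2)}$ is order-continuous, it suffices to bound $\int_{\real}\bigl\langle\sum_j|S_{I_j}f(t)|^2,\,H(t)\bigr\rangle\,dt$ for all $H\in L^{(p/2)'}(\real;X_{(2)}^*)$ with $H\ge0$ and $\|H\|\le1$. By (a) and Fubini this equals $\int_{\Omega}\bigl(\sum_j\int_{\real}|S_{I_j}g_\omega(t)|^2\,w_\omega(t)\,dt\bigr)\,d\mu(\omega)$, where $g_\omega=f(\cdot)(\omega)$ and $w_\omega=H(\cdot)(\omega)\ge0$. For each fixed $\omega$ the inner quantity is scalar, and I would bound it by a weighted form of Rubio de Francia's inequality: there are an absolute constant $c$ and a fixed iterate $\mathcal M$ of the one-dimensional Hardy--Littlewood maximal operator with
\[
\sum_j\int_{\real}|S_{I_j}g|^2\,w\,dt\ \le\ c\int_{\real}|g|^2\,(\mathcal Mw)\,dt\qquad\text{for all scalar }g\text{ and all }w\ge0 .
\]
Integrating in $\omega$, the right-hand side becomes $c\int_{\real}\bigl\langle|f(t)|^2,(\mathcal M_tH)(t)\bigr\rangle\,dt$, where $\mathcal M_t$ applies $\mathcal M$ in the $t$-variable to each slice of $H$; since $\mathcal M_tH$ coincides with the corresponding iterated lattice-valued maximal function, (b) gives $\|\mathcal M_tH\|_{L^{(p/2)'}(\real;X_{(2)}^*)}\lesssim\|H\|\le1$, and H\"older's inequality for the $L^{p/2}(\real;X_{(2)})$--$L^{(p/2)'}(\real;X_{(2)}^*)$ duality yields the bound $c'\,\bigl\||f|^2\bigr\|_{L^{p/2}(\real;X_{(2)})}=c'\,\|f\|_{L^p(\real;X)}^2$. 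A supremum over $H$ completes the proof.

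I expect the technical heart, and the main obstacle, to be this scalar input used slicewise: one needs Rubio de Francia's inequality in a weighted form whose constant is absolute once the weight $w$ is replaced by a \emph{fixed} iterated maximal function $\mathcal Mw$, with $\mathcal M$ chosen among iterates of the one-dimensional Hardy--Littlewood maximal operator so that the vector-valued maximal inequality~(b) is at our disposal for $X_{(2)}^*$. This can be obtained from the known weighted versions of~\eqref{RubioEstimate} for $A_1$-type weights (the $A_1$-constant being absorbed into $\mathcal M$), or, more self-containedly, by running Rubio de Francia's iteration $w\mapsto W=\sum_{k\ge0}(2\|\mathcal M\|)^{-k}\mathcal M^{k}w$ slicewise on $H$ to dominate it by an $A_1$-type majorant of comparable norm and then invoking the $L^2(W)$-boundedness of $g\mapsto(\sum_j|S_{I_j}g|^2)^{1/2}$. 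It is exactly at this step --- the linearisation taking place in $X_{(2)}$ and the maximal inequality being needed for $X_{(2)}^*$ --- that ``$X_{(2)}$ UMD'' is used in an essential way beyond ``$X$ UMD'', whose sufficiency for $\hbox{LPR}_p$ is, as the introduction recalls, still open.
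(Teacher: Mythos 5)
Your proposal takes a genuinely different route from the paper, but it has a gap at its ``technical heart,'' precisely where you flag the difficulty. You square, pass to $X_{(2)}$, and dualise against $H\in L^{(p/2)'}(\real;X_{(2)}^*)$; after Fubini the whole burden lands on a \emph{scalar} weighted $L^2$ statement
\[
\sum_j\int_{\real}|S_{I_j}g|^2\,w\,dt\ \le\ c\int_{\real}|g|^2\,(\mathcal Mw)\,dt ,
\]
or, equivalently via the Rubio de Francia iteration, an $A_1$-weighted $L^2$ bound for the square function with constant depending polynomially on $[w]_{A_1}$. This is not available from~\cite{R1985}: Rubio de Francia's weighted theorem is for $2<p<\infty$ and $w\in A_{p/2}$, and the standard derivation via the sharp function inequality $G(g)^\sharp\lesssim M_2 g$ does \emph{not} reach the endpoint $p=2$, $w\in A_1$, because at that point one would need the Hardy--Littlewood maximal operator to be bounded on $L^1(w)$, which fails for every nontrivial weight. (The Fefferman--Stein two-weight form with a fixed iterate $\mathcal M=M^k$, in the spirit of Wilson and P\'erez for Calder\'on--Zygmund operators, is also not a free invocation here: one would have to establish it for Rubio de Francia's square function, whose kernel only satisfies a weak H\"ormander-type estimate, essentially re-deriving the kernel analysis rather than bypassing it.) So this step is a genuine gap, not a routine citation.

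The paper avoids the endpoint entirely by staying at exponent $p>2$ in the vector-valued setting: it uses the pointwise slicewise estimate $G(f)^\sharp\le c\,M_2(f)$ from~\cite{R1985} directly on the $X$-valued function $G(f)$, proves $\|F\|_{L^p(\real;X)}\le C\|F^\sharp\|_{L^p(\real;X)}$ by the Stein duality $|\int uv|\le C\int u^\sharp\,\mathcal M(v)$ together with Bourgain's maximal inequality for the UMD lattice $X^*$ at exponent $p'$, and then bounds $\|M_2(f)\|_{L^p(\real;X)}\le C\|f\|_{L^p(\real;X)}$ by the same maximal inequality applied to $X_{(2)}$ at exponent $p/2>1$. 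Thus both maximal inequalities are used at exponents strictly between $1$ and $\infty$, and no scalar $L^2$-weighted Rubio de Francia estimate is required. If you can supply a self-contained proof of the $A_1$-weighted $L^2$ bound (or the fixed-iterate two-weight bound) for the square function, your argument would go through and would be a real alternative; as written, that ingredient is the missing piece, and the paper's route sidesteps it.
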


We shall need the following remark for the proof.

\begin{rk}
  \label{RboundedRem}
  If~$X$ is UMD and~$1 < p < \infty$, then the
  family~$\left\{S_{I}\right\}_{I \subseteq {\mathcal I}}$ is
  $R$-bounded (see~\cite{CPSW2000}), i.e., $$ \left\| \sum_{I
      \subseteq {\mathcal{I}}} \epsilon_I S_I f_I \right\|_{L^p(\real;
    \Rad(X))} \leq c_X\, \left\| \sum_{I \subseteq {\mathcal{I}}}
    \epsilon_I f_I \right\|_{L^p(\real; \Rad(X))}. $$
\end{rk}

\begin{proof}[Proof of Theorem~\ref{RubioForLattices}]
  The proof directly employs the pointwise estimate of~\cite{R1985}.
  We assume, that $X$ is a K\"othe function space on a measure space
  $(\O, \mu)$.
  
  Let~$f \in L^1_{\hbox{\scriptsize loc}}(\real; X)$.  Let $M(f)$ be
  the Hardy-Littlewood maximal function of~$f$, i.e., 
   $$ M (f)(t) =\sup_{I \subseteq \real \atop t \in I} \frac 1{\left| I \right|}
   \int_I \left| f(s) \right|\, ds $$ and $$ M_2(f) = \left[ M \left|
       f \right|^2 \right]^{\frac 12}. $$ Let $$ f^\sharp (t) =
   \sup_{I \subseteq \real \atop t \in I} \frac {1}{\left| I \right|}
   \int_I \left| f(s) - f_I \right|\, ds,\ \ f_I = \frac 1 {\left| I
     \right|} \int_I f(s)\, ds. $$ Note that $M (f)$ is a function of
   two variables $(t, \omega)$: for each fixed $\omega$, $M (f)(\cdot,
   \omega)$ is the usual Hardy-Littlewood maximal function of
   $f(\cdot,\omega)$. The same remark applies to $M_2(f)$ and
   $f^\sharp$. For $f$ sufficiently nice (which will be assumed in the
   sequel), all these functions are well-defined.

  Observe that due to Remark~\ref{DyadicLattice} we have only to show
  estimate~(\ref{LPRpEst}) for a well-distributed family of
  intervals. Let us fix a family of pairwise disjoint intervals~$\In$
  and let us assume that~$\In$ is well-distributed.  Fix a Schwartz
  function~$\psi(t)$ whose Fourier transform satisfies $$ \chi_{[-1/2,
    1/2]} \leq \hat \psi \leq \chi_{[-1, 1]}. $$ If~$I \in \In$, then
  we set
  \begin{equation*}
    \psi_I(t) = \left| I \right| \exp(2\pi\i c_I t)\,\psi(\left| I
    \right| t),
  \end{equation*}
  where~$c_I$ is the centre of~$I$.  The Fourier transform of $\psi_I$
  is adapted to $I$, i.e. 
  \begin{equation*}
    \chi_I \leq \hat \psi_I\leq \chi_{2I}.
  \end{equation*}
  {\added In particular, $$ S_I (f) = \psi_I * S_I(f). $$ Consequently,
    from the Khintchine inequality and Remark~\ref{RboundedRem}, $$
    \left\| \left( \sum_{I \in \In} \left| S_I (f) \right|^2
      \right)^{\frac 12} \right\|_{L^p(\real, X)} \leq c_p\, \left\|
      G(f) \right\|_{L^p(\real, X)},\ \ 1 < p < \infty, $$ where } $$
  G(f) = \left( \sum_{I \in \In} \left| \psi_I * f \right|^2
  \right)^{\frac 12},\ \ f \in L^1(\real; X). $${\added Thus, to
    finish the proof, we need to show that $$ \left\| G(f)
    \right\|_{L^p(\real, X)} \leq c_p\, \left\| f\right\|_{L^p(\real,
      X)},\ \ 2 < p < \infty. $$} It was shown in~\cite{R1985}
  that~$G(f (\cdot, \omega))^\sharp$ is almost everywhere dominated by
  $M_2(f (\cdot, \omega))$, i.e., $$ G(f (\cdot, \omega))^\sharp \le
  c\, M_2(f (\cdot, \omega)),\ \ \text{a.e.}\ \omega \in \O, $$ for
  some universal $c > 0$.  Since $$ G(f)(t, \o)=G(f(\cdot,
  \o))(t)\quad\mbox{and}\quad M_2(f)(t, \o)=M_2(f(\cdot, \o))(t),\quad
  t\in \real, \; \o\in\O, $$ we clearly have that $$ G(f)^\sharp \leq
  c\, M_2(f). $$ Therefore,
 $$\big\|G(f)^\sharp\big\|_{L^p(\real;X)}\le
 c \big\|M_2(f)\big\|_{L^p(\real;X)}\,.$$ It remains to prove
 $$\big\|G(f)\big\|_{L^p(\real;X)}\le
 C\big\|G(f)^\sharp\big\|_{L^p(\real;X)}\quad\mbox{and}\quad
 \big\|M_2(f)\big\|_{L^p(\real;X)}\le
 C\big\|f\big\|_{L^p(\real;X)}\,.$$ The second inequality above
 immediately follows from Bourgain's maximal inequality for UMD
 lattices (applied to $X_{(2)}$ here,
 see~\cite[Theorem~3]{Rubio1986}):
 $$\big\|M_2(f)\big\|_{L^p(\real;X)}^2
 =\big\|M(|f|^2)\big\|_{L^{\frac{p}2}(\real;X_{(2)})}
 \le C \big\||f|^2\big\|_{L^{\frac{p}2}(\real;X_{(2)})}
 =C \big\|f\big\|_{L^p(\real;X)}^2\,.$$
It remains to show the first one. To this end we shall prove the
following inequality (for a general $f$ instead of $G(f)$)
 $$\big\|f\big\|_{L^p(\real;X)}\le
 C\big\|f^\sharp\big\|_{L^p(\real;X)}\,.$$
This is again an immediate consequence of the following classical
duality inequality (see~\cite[p.~146]{Stein-HA})
 $$\left|\int_{\real} u v\right|\le C\int_{\real} u^\sharp \M(v)$$
 for any $u\in L^p(\real)$ and $v\in L^{p'}(\real)$, where $\M(v)$
 denotes the grand maximal function of $v$.  Note that $\M(v)\le
 CM(v)$. Now let $g\in L^{p'}(\real; X^*)$ be a nice function. We then
 have  
 \begin{multline*}
   \left|\int_{\real\times\O} f g\right|
   \le C\int_{\real\times\O} f^\sharp M(g) \\ \le
   C\big\|f^\sharp\big\|_{L^p(\real;X)}\,
   \big\|M(g)\big\|_{L^{p'}(\real;X^*)} \\ \le
   C\big\|f^\sharp\big\|_{L^p(\real;X)}\,
   \big\|g\big\|_{L^{p'}(\real;X^*)}\,,
\end{multline*}
where we have used again
 Bourgain's maximal inequality for $g$ (noting that $X^*$ is also a
 UMD lattice).  Therefore, taking supremum over all $g$ in the unit
 ball of $L^{p'}(\real; X^*)$, we deduce the desired inequality, so
 prove the theorem.

 {\added Finally, observe that the proof above operates with
   individual functions.  This, coupled with the UMD property of~$X$,
   implies that~$X$ can always be assumed separable and it can always
   be equipped with a weak unit.  }
\end{proof}

\section{LPR property for general Banach spaces}
\label{sec:other-than-lattices}

Let~$X$ be a Banach space (not necessarily a lattice).  We shall prove
the following theorem.

\begin{thm}\label{rubioqp}
  If $X$ has the LPR$_q$ for some $2\le q<\8$, then $X$ has the
  LPR$_p$ for any $q \leq p < \8$.
\end{thm}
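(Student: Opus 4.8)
The plan is to carry over the sharp‑function argument from the proof of Theorem~\ref{RubioForLattices}, with $L^2$‑boundedness replaced by the hypothesis $\hbox{LPR}_q$ and the lattice maximal operator $M_2$ replaced by the scalar operator
\[
  M_q(f)(t):=\big(M(\|f(\cdot)\|_X^{q})(t)\big)^{1/q},
\]
$M$ being the Hardy--Littlewood maximal operator on $\real$. The restriction $p\ge q$ will enter exactly because $M_q$ is bounded on $L^p(\real)$ precisely when $p>q$ (the case $p=q$ being the hypothesis). To begin, $\hbox{LPR}_q$ forces $X$ to be UMD, so $\{S_I\}_{I\subseteq\real}$ is $R$‑bounded on every $L^r(\real;X)$, $1<r<\8$ (Remark~\ref{RboundedRem}), and by Lemma~\ref{ToDydic} together with parts i)--ii) of Remark~\ref{DyadicLattice} it is enough to treat a well‑distributed collection $\In=(I_j)$. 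Fix such an $\In$, choose a Schwartz $\psi$ with $\chi_{[-1/2,1/2]}\le\hat\psi\le\chi_{[-1,1]}$ and set $\psi_I(t)=|I|\exp(2\pi\i c_It)\psi(|I|t)$ as in the proof of Theorem~\ref{RubioForLattices}, so that $S_If=S_I(\psi_I*f)$; the $R$‑boundedness of $\{S_I\}$ then reduces everything to the bound
\[
  \big\|G(f)\big\|_{L^p(\real;\Rad(X))}\ \lesssim\ \|f\|_{L^p(\real;X)},\qquad G(f):=\sum_{j}\e_j\,\psi_{I_j}*f ,
\]
the $\Rad(X)$‑valued counterpart of Rubio de Francia's square function.

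The core step is the pointwise inequality $G(f)^{\sharp}\lesssim M_q(f)$, where $(\cdot)^{\sharp}$ is the Fefferman--Stein sharp maximal function of the $\Rad(X)$‑valued function $G(f)$. Fix $t$ and an interval $Q\ni t$, and split $f=f\chi_{2Q}+f\chi_{(2Q)^{c}}=:f_1+f_2$. For the local part, $\psi_I*f_1=\psi_I*S_{2I}f_1$, the convolutions $\{\psi_I*\,\cdot\,\}_{I}$ are $R$‑bounded on $L^q(\real;X)$ (a Mikhlin‑type bound, available since $X$ is UMD), and $\{2I:I\in\In\}$ has bounded overlap; splitting the latter into $O(1)$ pairwise disjoint subfamilies and applying $\hbox{LPR}_q$ to each gives $\|G(f_1)\|_{L^q(\real;\Rad(X))}\lesssim\|f_1\|_{L^q(\real;X)}$, whence $\frac1{|Q|}\int_Q\|G(f_1)\|_{\Rad(X)}\le\big(\frac1{|Q|}\int_{2Q}\|f\|_X^{q}\big)^{1/q}\lesssim M_q(f)(t)$. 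For the far part, Minkowski's inequality together with the elementary identity $\|\sum_j\e_jc_jx\|_{\Rad(X)}=\|x\|_X(\sum_j|c_j|^2)^{1/2}$ gives, for $y,y'\in Q$,
\[
  \big\|G(f_2)(y)-G(f_2)(y')\big\|_{\Rad(X)}\ \le\ \int_{(2Q)^{c}}\Big(\sum_{j}\big|\psi_{I_j}(y-s)-\psi_{I_j}(y'-s)\big|^{2}\Big)^{1/2}\|f(s)\|_X\,ds ,
\]
and the integral on the right is precisely the scalar kernel expression estimated by Rubio de Francia in~\cite{R1985} for well‑distributed collections; after averaging in $y,y'$ over $Q$ it is $\lesssim M(\|f\|_X)(t)\le M_q(f)(t)$. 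Adding the two parts yields $G(f)^{\sharp}(t)\lesssim M_q(f)(t)$.

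To finish, $\big(\|G(f)(\cdot)\|_{\Rad(X)}\big)^{\sharp}\lesssim G(f)^{\sharp}$ pointwise (reverse triangle inequality), so the scalar Fefferman--Stein inequality on $L^p(\real)$ gives
\[
  \|G(f)\|_{L^p(\real;\Rad(X))}=\big\|\,\|G(f)(\cdot)\|_{\Rad(X)}\,\big\|_{L^p(\real)}\ \lesssim\ \|G(f)^{\sharp}\|_{L^p(\real)}\ \lesssim\ \|M_q(f)\|_{L^p(\real)}=\big\|M(\|f\|_X^{q})\big\|_{L^{p/q}(\real)}^{1/q} ,
\]
which is $\lesssim\|f\|_{L^p(\real;X)}$ by the boundedness of $M$ on $L^{p/q}(\real)$, valid because $p/q>1$. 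For $p=q$ there is nothing to prove. Hence $X$ has $\hbox{LPR}_p$ for all $q\le p<\8$.

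The main obstacle is the sharp‑function estimate; within it the delicate point in the non‑lattice setting is the local bound $\|G(f_1)\|_{L^q(\real;\Rad(X))}\lesssim\|f_1\|_{L^q(\real;X)}$, which is where the hypothesis $\hbox{LPR}_q$ is genuinely used and which relies on the $R$‑boundedness of $\{S_I\}$ and of the mollifiers $\{\psi_I*\,\cdot\,\}$ for UMD $X$ together with the bounded overlap of $\{2I\}$ supplied by Section~\ref{sec:dyadic-decomposition}. By contrast, the far‑part estimate should transfer essentially verbatim from the scalar case of~\cite{R1985} via Minkowski's inequality and the identity $\|\sum_j\e_jc_jx\|_{\Rad(X)}=\|x\|_X(\sum_j|c_j|^2)^{1/2}$, and ought to be only bookkeeping.
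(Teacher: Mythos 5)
The key error is in the far‑part estimate. After splitting $f=f\chi_{2Q}+f\chi_{(2Q)^{c}}$ and using Minkowski's integral inequality to take the $ds$‑integral outside the $\Rad(X)$‑norm, you are left to bound
\[
\int_{(2Q)^{c}}\Bigl(\sum_{j}\bigl|\psi_{I_j}(y-s)-\psi_{I_j}(y'-s)\bigr|^{2}\Bigr)^{1/2}\|f(s)\|_{X}\,ds
\]
and claim this is $\lesssim M(\|f\|_X)(t)$. This pointwise kernel estimate is false for general well‑distributed families. Take $I_j=[j-\tfrac12,j+\tfrac12]$ for $j=1,\dots,N$ (which is well distributed): then $\psi_{I_j}(t)=e^{2\pi \i j t}\psi(t)$, and for generic $y,y'\in Q$ and $s$ at a bounded distance from $Q$ the inner sum is of order $N$, so the integral is of order $\sqrt N$, which is unbounded while $M(\|f\|_X)(t)$ stays bounded. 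The whole point of Rubio de Francia's kernel estimate (reproduced in the paper as Lemma~\ref{kernel estimate}) is that the oscillating factors $\exp(\pm 2\pi\i c_{j,k}y)$ are almost orthogonal in $L^2(dy)$, by Zygmund's theorem on gap Dirichlet series; the gain comes from an $L^2$‑in‑$y$ average over each annulus $I_m(x,z)$, followed by Cauchy--Schwarz. Pulling the $s$‑integral out via Minkowski freezes the oscillations pointwise and destroys exactly that cancellation; there is no scalar substitute. In fact this explains why the paper cannot just quote Rubio de Francia but has to re‑prove the kernel bound in a genuinely vector‑valued way, testing against a norming sequence $(\l_{j,k})\subset X^*$ and invoking cotype~2 of $X^*$ via Lemma~\ref{cotype2-riesz}.

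A secondary gap is the reduction to a well‑distributed collection measured in $\Rad(X)$. Lemma~\ref{ToDydic} and Remark~\ref{DyadicLattice}\,i)--ii) reduce $\hbox{LPR}_p$ to an estimate in $L^p(\real;\Rad_2(X))$ for the dyadic pieces $(I^u_{j,k})$, not to an $\Rad(X)$‑valued square‑function estimate for an arbitrary well‑distributed family. The collapse $\Rad_2(X)\simeq\Rad(X)$ used to get Remark~\ref{DyadicLattice}\,iv) needs Pisier's $\alpha$‑property (part iii)), which holds for Banach lattices but not in general. The paper therefore carries the double Rademacher system (indices $j$ and $k$) throughout: the $L^q$‑boundedness of $T$ (Lemma~\ref{Lq}) relies on Lemma~\ref{LPRqsub}, which is a genuinely $\Rad_2$‑statement, and the kernel bound and $L^\infty\to\BMO$ estimate are likewise for the doubly‑indexed kernel $K=\sum_{j,k}\e_j\e'_k K_{j,k}$. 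Finally, the paper obtains $L^p$‑boundedness for $q<p<\infty$ by interpolating between $L^q$ and $L^\infty\to\BMO$ rather than by a direct pointwise sharp‑function estimate; the pointwise inequality $T(f)^\sharp\lesssim M_q(f)$ is only stated at the end as a remark, and it too would have to be derived from the $L^2$‑in‑$y$ kernel bound, not from a Minkowski‑type pointwise estimate.
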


The proof of the theorem requires some lemmas.

\begin{lem}\label{LPRqsub}
  Assume that $X$ has the $\hbox{LPR}_q$ property.  Let $\left( I_j
  \right)_{j \geq 1}$ be a finite sequence of mutually disjoint
  intervals of $\real$ and $\left( I_{j, k} \right)_{k = 1}^{n_j}$ be
  a finite family of mutually disjoint subintervals of $I_j$ for each
  $j \geq 1$.  Assume that the relative position of $I_{j, k}$ in
  $I_j$ is independent of $j$, i.e., $I_{j,k}-a_j=I_{j',k}-a'_j$
  whenever both $I_{j,k}$ and $I_{j',k}$ are present {\added (i.e., $k
    \leq \min \left\{n_j, n_{j'}\right\}$)}, where $a_j$ is the left
  endpoint of $I_j$.  Then
 \[
 \left\| \sum_{j = 1}^\infty \sum_{k=1}^{n_j}
   \e_j\e_k'S_{I_{j,k}}f\right\|_{L^q(\real;\Rad_2(X))} \le
 c\,\big\|f\big\|_{L^q(\real; X)}\,, \quad\forall\; f\in L^q(\real;X).
 \]
 \end{lem}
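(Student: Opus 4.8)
The plan is to deduce the "relative" estimate from the plain $\hbox{LPR}_q$ property by the standard modulation/averaging trick, using the $\alpha$-type structure hidden in the double Rademacher space $\Rad_2(X)$. First I would reindex: since the relative position of $I_{j,k}$ in $I_j$ does not depend on $j$, there are intervals $J_k$ (with $J_k = I_{j,k}-a_j$ whenever $I_{j,k}$ is present) so that $I_{j,k}=a_j+J_k$. Modulating as in the proof of Lemma \ref{ToDydic}, namely using the identity $\exp(-2\pi\i a_j\,\cdot)S_{I_{j,k}}f = S_{J_k}[\exp(-2\pi\i a_j\,\cdot)f]$ together with Bourgain's Fourier multiplier theorem (valid since $\hbox{LPR}_q$ forces $X$ to be UMD), I would reduce the claimed inequality to
\[
\left\| \sum_{j=1}^\infty\sum_{k=1}^{n_j}\e_j\e_k' S_{J_k} g_j\right\|_{L^q(\real;\Rad_2(X))}\le c\,\Big\|\sum_{j=1}^\infty \e_j g_j\Big\|_{L^q(\real;\Rad(X))},
\]
where $g_j=\exp(-2\pi\i a_j\,\cdot)f$ are, up to unimodular multipliers, the same function $f$; more precisely after undoing the modulations it suffices to bound the left-hand side by $c\|f\|_{L^q(\real;X)}$.

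The key step is then to handle the two Rademacher systems. For fixed $\o'\in[0,1]$ the signs $\e_k'(\o')$ are just $\pm1$, so by the contraction principle one may drop them; what remains is, for each fixed choice of signs, a family of the form $\sum_j \e_j S_{J_k}f$ summed over $k$, which one wants to control by $\hbox{LPR}_q$ applied to the single disjoint family $\In=(a_j+J_k)_{j,k}$ — except that this family need not be disjoint across different $k$'s unless the $J_k$ are disjoint, which they are by hypothesis ($\left(I_{j,k}\right)_k$ are mutually disjoint subintervals of $I_j$). Thus $\left(a_j+J_k\right)_{j,k}$ is a pairwise disjoint family, and the plain $\hbox{LPR}_q$ estimate gives
\[
\Big\|\sum_{j,k}\e_{j,k}S_{a_j+J_k}f\Big\|_{L^q(\real;\Rad(X))}\le c\,\|f\|_{L^q(\real;X)}
\]
with a single Rademacher system $\e_{j,k}$. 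To pass from the single-indexed Rademacher sum to the iterated $\e_j\e_k'$ sum one invokes the standard comparison: by Fubini and the contraction principle one has $\big\|\sum_{j,k}\e_j\e_k' x_{jk}\big\|_{\Rad_2(X)}\le c\,\big\|\sum_{j,k}\e_{jk} x_{jk}\big\|_{\Rad(X)}$ in any Banach space (this direction needs no lattice hypothesis — it is the easy inequality, obtained by conditioning on $\o'$ and using that $\left(\e_j(\o)\e_k'(\o')\right)_{j}$ has the same distribution as $\left(\e_{jk}(\o)\right)_j$ for each fixed $\o'$, then integrating in $\o'$). Combining these reductions yields the lemma.

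I expect the main obstacle to be bookkeeping rather than a genuine difficulty: one must make sure that the modulation argument applies uniformly in $j$ and $k$ (so that Bourgain's multiplier theorem is used once, with a multiplier that is constant on each $J_k-$like block, exactly as in Lemma \ref{ToDydic}), and one must be careful that the "$n_j$ depends on $j$" feature does not break disjointness or the reindexing by $J_k$ — it does not, since we only ever require $k\le n_j$. The one genuine point to check is that the reduction from $\Rad_2$ to $\Rad$ goes in the direction we need (the easy, lattice-free direction), which it does; the reverse inequality, which does require the $\alpha$-property, is not needed here.
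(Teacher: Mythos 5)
The central step in your reduction---the claimed inequality
\[
\Big\|\sum_{j,k}\e_j\e_k'\,x_{jk}\Big\|_{\Rad_2(X)}\le c\,\Big\|\sum_{j,k}\e_{jk}\,x_{jk}\Big\|_{\Rad(X)}
\]
valid ``in any Banach space''---is false, and the Fubini argument you sketch for it does not work. Conditioning on $\o'$ does not collapse $\Rad_2$ to $\Rad$, because for a fixed $\o'$ the family $\left(\e_j(\o)\,\e_k'(\o')\right)_{j,k}$ is \emph{not} distributed like the fully independent family $\left(\e_{jk}(\o)\right)_{j,k}$: for each fixed $j$ the variables $\e_j(\o)\,\e_k'(\o')$ agree up to a deterministic sign as $k$ varies, so there is no independence across $k$. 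The inequality in question is precisely one half of Pisier's property $(\a)$ (cf.\ Remark~\ref{DyadicLattice}~iii)), and it is a genuine restriction on $X$. A concrete counterexample inside the class of UMD spaces of type~$2$: take $X=S_4$ (the Schatten $4$-class) and $x_{jk}=e_{jk}$ the matrix units with $1\le j,k\le n$. The matrix $(\e_j\e_k')_{j,k}$ has rank one with unique nonzero singular value $n$, so the left-hand side equals $n$; a random $\pm1$ matrix has $S_4$-norm $\sim n^{3/4}$, so the right-hand side is $\sim n^{3/4}$, and the inequality fails as $n\to\infty$. Thus the passage from the fully independent sum $\sum\e_{jk}S_{a_j+J_k}f$, which the plain $\hbox{LPR}_q$ estimate controls, to the mixed $\e_j\e_k'$ sum is not bookkeeping; it is exactly where the difficulty lies, and the route you propose does not cross it.

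The paper's proof avoids comparing $\Rad_2(X)$ with $\Rad(X)$ altogether. After the same modulation reduction it observes that if $X$ has $\hbox{LPR}_q$ then so does $\Rad(X)$ (by Fubini, $\Rad(X)$ sits inside $L^q([0,1];X)$ and $L^q$-spaces built over a space with $\hbox{LPR}_q$ inherit it), and then applies $\hbox{LPR}_q$ \emph{for the space $\Rad(X)$} to the singly indexed disjoint family $(\tilde I_k)_k$ of translated subintervals, the test function being the $\Rad(X)$-valued $h=\sum_{j}\e_j\exp(-2\pi\i a_j\,\cdot)f$. This produces the outer $\e_k'$ sum directly, with no decoupling and no appeal to property $(\a)$. (A second step then removes the simplifying assumption $\bigcup_k I_{j,k}=I_j$ by completing $(J_k)$ to a partition of $[0,\infty)$, applying the first case, and projecting onto the relevant $\e_s'$ coordinates.) You would need this change of strategy---invoking $\hbox{LPR}_q$ for $\Rad(X)$ rather than for $X$---for your argument to close.
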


\begin{proof}
  We first assume that $\bigcup_{k = 1}^{n_j} I_{j,k}=I_j$ for each $j
  \geq 1$. Note that
  \[S_{I_{j,k}}f=\exp(2\pi\i  a_j\,\cdot)S_{I_{j,k}-a_j}(\exp(-2\pi\i 
  a_j\,\cdot)f).\] Thus, {\added by the contraction principle},
  \[
  \left\| \sum_{j = 1}^\infty \sum_{k = 1}^{n_j} \e_j\e_k'S_{I_{j,k}}f
  \right\|_q \sim  \left\| \sum_{k = 1}^\infty \e_k'\sum_{j:\ n_j \geq
    k}\e_jS_{I_{j,k}-a_j}(\exp(-2\pi\i a_j\,\cdot)f) \right\|_q.
  \]
  Since $X$ has the $\hbox{LPR}_q$ property, so does $\Rad(X)$.
  {\added Let us apply this property of $\Rad(X)$ to the intervals
    $\left( \tilde I_k \right)_{k \geq 1}$ where~$\tilde I_k =
    I_{j,k}-a_j$, for some~$j$ such that~$n_j \geq k$ (for any
    such~$j$ the interval~$I_{j, k} - a_j$ is independent of~$j$ by
    the assumptions of the lemma).  We apply this property to the
    function \[\sum_{k = 1}^\infty \sum_{j:\ n_j \geq k}
    \e_jS_{I_{j,k}-a_j}(\exp(-2\pi\i a_j\,\cdot)f) = \sum_{k =
      1}^\infty S_{\tilde I_k} \left[ \sum_{j:\ n_j \geq k} \epsilon_j
      \left( \exp(-2 \pi \i a_j \cdot) f \right) \right].\]}
  We obtain
  \begin{multline}
    \label{RemarkIIi-op}
    \left\| \sum_{k = 1}^\infty \e_k'\sum_{j:\ n_j \geq k} \e_j
      S_{I_{j,k}-a_j}(\exp(- 2\pi\i a_j\,\cdot)f)\right\|_q \\ \leq c
    \, \left\| \sum_{k = 1}^\infty \sum_{j:\ n_j \geq k}
      \e_jS_{I_{j,k}-a_j}(\exp(-2\pi\i a_j\,\cdot)f) \right\|_q \sim c
    \left\| \sum_{j = 1}^\infty \sum_{k = 1}^{n_j} \e_j
      S_{I_{j,k}}f\right\|_q \\ = c \, \left\| \sum_{j = 1}^\infty
      \e_jS_{I_{j}}f\right\|_q \le c \, \|f\|_{q}.
  \end{multline}
  {\added Assume now that $\bigcup_{k = 1}^{n_j} I_{j,k}\neq I_j$ for
    some $j$.  In this case, consider the family of intervals~$\left(
      \tilde I_k \right)_{k = 1}^\infty$ introduced above.  Observe
    that every~$\tilde I_k \subseteq \left[ 0, +\infty \right)$.
    Observe also that the the right ends of the intervals~$\left( I_j
      - a_j \right)_{j \geq 1}$, that is the points~$b_j - a_j$ do not
    belong to the union~$\cup_{k = 1}^\infty \tilde I_k$.  Let~$\left(
      \tilde I_\el \right)_{\el = 1}^\infty$ be the family of disjoint
    intervals such that $$ \bigcup_{\el = 1}^\infty \tilde I_\el =
    \left[ 0, +\infty \right) \setminus \bigcup_{k = 1}^\infty \tilde
    I_k $$ and such that neither of the points~$\left( b_j - a_j
    \right)_{j = 1}^\infty$ is inner for some~$\tilde I_\el$.  Let
    also~$m_j$ be the maximum number such that the intervals~$\tilde
    I_\el$ with~$\el \leq m_j$ are all to the left of the point~$b_j -
    a_j$.  Set~$I_{j, \el} = \tilde I_\el + a_j$.  Then, $$ I_j =
    \bigcup_{k = 1}^{n_j} I_{j, k} + \bigcup_{\el = 1}^{m_j} I_{j,
      \el}. $$}It is clear that the relative position of
  $(I_{j,k})_{k = 1}^{n_j} \cup (I_{j,\el})_{\el = 1}^{m_j}$ in $I_j$
  is again independent of $j$. 

  {\added Before we proceed, let us re-index the intervals~$\left(
      I_{j, k} \right)_{k = 1}^{n_j}$ and~$\left( I_{j, \el}
    \right)_{\el = 1}^{m_j}$ into a family~$\left( I_{j, s} \right)_{s
      = 1}^{m_j + n_j}$ as follows.  We arrange these intervals from
    left to right within~$I_j$ and index them sequentially from~$1$
    up to~$n_j + m_j$.  Moreover, let~$K_j \subseteq [1, n_j + m_j]$ be
    the subset corresponding to the first family of intervals and~$L_j
    \subseteq [1, n_j + m_j]$ be the subset of indices corresponding
    to the second family of intervals.  Observe that, if~$K = \cup_{j
      = 1}^\infty K_j$ and~$L = \cup_{j= 1}^\infty L_j$, then, for
    every to~$j$, $K_j = K \cap [1, n_j + m_j]$ and, similarly, $L_j =
    L \cap [1, n_j + m_j]$.} Thus by the previous part we get
  {\added $$ \left\| \sum_{j = 1}^\infty \sum_{s = 1}^{n_j + m_j}
      \epsilon_j \epsilon'_s S_{I_{j, s}} f \right\|_{q} \leq c_q\,
    \left\| f \right\|_q.  $$ Observe also that 
    \begin{multline*}
      \sum_{j = 1}^\infty \sum_{s = 1}^{n_j + m_j} \epsilon_j
      \epsilon'_s S_{I_{j, s}} f = \sum_{s = 1}^\infty \sum_{j:\ n_j+
        m_j \geq s} \epsilon_j \epsilon'_s S_{I_{j, s}} f \\ = \sum_{s
        \in K} \sum_{j:\ n_j+ m_j \geq s} \epsilon_j \epsilon'_s
      S_{I_{j, s}} f + \sum_{s \in L } \sum_{j:\ n_j+ m_j \geq s}
      \epsilon_j \epsilon'_s S_{I_{j, s}} f \end{multline*} Thus, by
    taking projection onto the subspace spanned
    by~$\left\{\epsilon'_s\right\}_{s \in K}$, we continue $$ \left\|
      \sum_{s \in K} \sum_{j:\ n_j+ m_j \geq s} \epsilon_j \epsilon'_s
      S_{I_{j, s}} f \right\|_{q} \leq c_q\, \left\| f\right\|_q. $$
    Finally, we observe that $$ \sum_{s \in K} \sum_{j:\ n_j+ m_j \geq
      s} \epsilon_j \epsilon'_s S_{I_{j, s}} f = \sum_{j = 1}^\infty
    \sum_{k = 1}^{n_j} \epsilon_j \epsilon'_k S_{I_{j, k}} f. $$ }
  Hence the lemma is proved.
\end{proof}

The following lemma is interesting in its own right.  We shall only
need its first part.

\begin{lem}\label{cotype2-riesz}
 Let $Y$ be a Banach space. Let $(\Sigma, \nu)$ be a measure space and  $(h_j)\subset L^2(\Sigma)$ a finite sequence.
 \begin{enumerate}[\rm i)]
 \item If $Y$ is of cotype 2 and  there exists a constant $c$ such that
 \[
 \big\|\sum_j\a_jh_j\big\|_2\le c\,\big(\sum_j|\a_j|^2\big)^{1/2}\,,\quad
 \forall\; \a_j\in\com,
 \]
then
  \[
 \big\|\sum_jh_ja_j\big\|_{L^2(\Sigma;Y)} \le c'\,\big\|\sum \e_ja_j\big\|_{\Rad(Y)}
 \,,\quad\forall\; a_j\in Y.
 \]
 \item If $Y$ is of type 2 and there exists a constant $c$ such that
 \[
 \big(\sum_j|\a_j|^2\big)^{1/2}\le c\,\big\|\sum_j\a_jh_j\big\|_2\,,\quad
 \forall\; \a_j\in\com,
 \]
then
  \[
 \big\|\sum \e_ja_j\big\|_{\Rad(Y)}\le c'\,\big\|\sum_jh_ja_j\big\|_{L^2(\Sigma;Y)} 
 \,,\quad\forall\; a_j\in Y.
 \]
 \end{enumerate}
 \end{lem}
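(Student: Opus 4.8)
The plan is to translate both assertions into the language of $\gamma$-radonifying operators, where each becomes a short consequence of the ideal property together with the standard fact that the two natural inclusions between $L^2(\Sigma;Y)$ and $\gamma(L^2(\Sigma),Y)$ are controlled by, respectively, the cotype $2$ and the type $2$ of $Y$.

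First I would set up the operators. The hypothesis of i) says exactly that the map $v\colon\ell^2_n\to L^2(\Sigma)$ determined by $v e_j=h_j$ (or $\bar h_j$ if the scalars are complex) is bounded with $\|v\|\le c$. Let $u\colon\ell^2_n\to Y$ be given by $u e_j=a_j$, and write $\gamma(u)=\big\|\sum_j g_j a_j\big\|_{L^2([0,1];Y)}$ for its $\gamma$-radonifying norm, $(g_j)$ being i.i.d.\ standard Gaussian variables. Since cotype $2$ forces finite cotype, the Gaussian--Rademacher comparison gives $\gamma(u)\lesssim\big\|\sum_j\e_j a_j\big\|_{\Rad(Y)}$, and in the reverse direction $\big\|\sum_j\e_j a_j\big\|_{\Rad(Y)}\le\sqrt{\pi/2}\,\gamma(u)$ holds with no assumption on $Y$. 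Finally the function $g:=\sum_j h_j a_j\in L^2(\Sigma;Y)$ induces $T=T_g\colon L^2(\Sigma)\to Y$, $T f=\int_\Sigma f\,g\,d\nu=\sum_j\langle f,h_j\rangle a_j$; it is finite rank, hence $T\in\gamma(L^2(\Sigma),Y)$, and it factors as $T=u\circ v^*$.

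With these in place, i) is a three-link chain: $\|g\|_{L^2(\Sigma;Y)}\lesssim\gamma(T)$ because $Y$ has cotype $2$ (this is the inclusion $\gamma(L^2(\Sigma),Y)\hookrightarrow L^2(\Sigma;Y)$; see e.g.\ van Neerven's survey on $\gamma$-radonifying operators or the monograph of Hyt\"onen, van Neerven, Veraar and Weis); then $\gamma(T)=\gamma(u\circ v^*)\le\|v^*\|\,\gamma(u)\le c\,\gamma(u)$ by the ideal property; and $\gamma(u)\lesssim\big\|\sum_j\e_j a_j\big\|_{\Rad(Y)}$. For ii) I would run the chain backwards. The lower bound in the hypothesis says $v$ is bounded below, so $v^*v$ is invertible on $\ell^2_n$ with $\|(v^*v)^{-1}\|\le c^2$; put $w:=v(v^*v)^{-1}$, so that $v^*w=\mathrm{id}_{\ell^2_n}$ and $\|w\|\le c$. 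Then $u=Tw$, whence $\gamma(u)=\gamma(Tw)\le\|w\|\,\gamma(T)\le c\,\gamma(T)$; moreover $\gamma(T)\lesssim\|g\|_{L^2(\Sigma;Y)}$ because $Y$ has type $2$ (now the inclusion $L^2(\Sigma;Y)\hookrightarrow\gamma(L^2(\Sigma),Y)$). Combining with $\big\|\sum_j\e_j a_j\big\|_{\Rad(Y)}\le\sqrt{\pi/2}\,\gamma(u)$ gives ii).

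The one non-formal ingredient is the pair of inclusions between $L^2(\Sigma;Y)$ and $\gamma(L^2(\Sigma),Y)$; this is where the cotype-$2$ (resp.\ type-$2$) hypothesis is genuinely spent, and it is the step I expect to be the real heart of the matter if one wants a self-contained argument. In that case the $\gamma$-formalism can be bypassed: the finite family $(h_j)$ lives on a $\sigma$-finite subset of $\Sigma$, so approximate each $h_j$ in $L^2$ by a step function $\sum_i c_{ji}\chi_{A_i}$ over finitely many disjoint sets $A_i$ of finite measure; then $g\approx\sum_i\chi_{A_i}z_i$ with $z_i=\sum_j c_{ji}a_j$ and $\|g\|^2_{L^2(\Sigma;Y)}\approx\sum_i\big\|\sqrt{\nu(A_i)}\,z_i\big\|^2$. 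Apply the cotype-$2$ inequality of $Y$ to the vectors $\sqrt{\nu(A_i)}\,z_i$, expand the $z_i$, and observe that the matrix $\big(\sqrt{\nu(A_i)}\,c_{ji}\big)_{j,i}$ has $\ell^2\!\to\!\ell^2$ norm $\lesssim c$, since up to the approximation its Gram matrix is $(\langle h_i,h_j\rangle)$, which the hypothesis bounds by $c^2$; the ``rotated'' Rademacher average that results is then dominated by $\big\|\sum_j\e_j a_j\big\|_{\Rad(Y)}$ via rotation invariance of Gaussian averages and, once more, the finite-cotype Gaussian--Rademacher comparison. Part ii) is handled symmetrically, the lower $\ell^2$-estimate on $(h_j)$ being exactly what allows one to invert the relevant matrix.
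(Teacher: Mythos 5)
Your proposal is correct, and it reaches the same estimates by a genuinely different route. The paper proves i) via Pisier's $2$-summing factorization theory: cotype $2$ of $Y$ gives $\pi_2(u)\lesssim\|\sum_j\e_ja_j\|_{\Rad(Y)}$ for $u(\a)=\sum_j\a_ja_j$ (Pisier, \emph{Factorization of linear operators}, Lemma~3.8 and Theorem~3.9), and then the Pietsch description of $2$-summing operators bounds $\|\sum_jh_ja_j\|_{L^2(\Sigma;Y)}$ by $\pi_2(u)$ times the weak-$L^2$ norm of $(h_j)$, which the hypothesis controls by $c$. For ii) the paper dualizes: it builds the biorthogonal system $(h_j^*)$ in $\operatorname{span}(h_j)\subset L^2(\Sigma)$, notes that the lower $\ell^2$-bound on $(h_j)$ becomes an upper $\ell^2$-bound on $(h_j^*)$, and applies i) with $Y^*$ (which is cotype $2$ since $Y$ is type $2$) followed by a duality pairing. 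You instead work with $\gamma$-radonifying norms, exploiting the embeddings $\gamma(L^2(\Sigma),Y)\hookrightarrow L^2(\Sigma;Y)$ (cotype $2$) and $L^2(\Sigma;Y)\hookrightarrow\gamma(L^2(\Sigma),Y)$ (type $2$) together with the ideal property $\gamma(ASB)\le\|A\|\,\gamma(S)\,\|B\|$ and the Gaussian--Rademacher comparison under finite cotype. This is equivalent in substance --- Pisier's theorem says $\pi_2$ and the $\gamma$/$\ell$-norm are comparable on cotype-$2$ spaces, and the Pietsch estimate you'd use is exactly $\gamma(uv^*)\le\gamma(u)\|v^*\|$ --- but your presentation is cleaner in two respects: the roles of type and cotype appear symmetrically as the two inclusions, and your proof of ii) is direct (invert the Gram matrix $v^*v$ and factor $u=Tw$) rather than by duality, so it is a genuine structural simplification. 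Your factorization $T=u\circ v^*$, the bound $\|w\|=\|v(v^*v)^{-1}\|\le c$, and the chain of $\gamma$-inequalities are all correct, modulo the harmless complex-conjugate bookkeeping you already flag. The ``self-contained'' sketch at the end is also sound in outline, though the phrase ``rotation invariance of Gaussian averages'' is doing more work than the words suggest: what one actually needs there is the contraction property of Gaussian averages under a matrix of operator norm $\le c$, which is precisely the $\gamma$-ideal property in disguise.
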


\begin{proof}
  i) Let $(a_j)\subset Y$ be a finite sequence. Consider the operator
  $u:\el^2\to Y$ defined by
 $$u(\a)=\sum_j\a_j a_j,\quad \forall\; \a=(\a_j)\in\el^2.$$
 It is well known (see~\cite[Lemma~3.8 and
 Theorem~3.9]{Pisier-Factorization}) that
 \[\pi_2(u)\le c_0 \big\|\sum \e_ja_j\big\|_{\Rad(Y)},\]
 where $c_0$ is a constant depending only on the cotype 2 constant of
 $Y$.  Let $h(\s)=(h_j(\s))_j$ for $\s\in\Sigma$. Then by the
 assumption on $(h_j)$ we get 
 \begin{multline*}
   \big \|\sum_jh_ja_j\big\|_{L^2(\Sigma;Y)}
   = \\ \pi_2(u)\sup\big\{\big(\int_\Sigma|\sum_j\xi_jh_j(s)|^2ds\big)^{1/2}\;:\;
   \xi\in\el^2, \|\xi\|_2\le1\big\}\\
   \le c' \big\|\sum \e_ja_j\big\|_{\Rad(Y)}.
 \end{multline*}

 ii) Let $H$ be the linear span of $(h_j)$ in $L^2(\Sigma)$. Let
 $h_j^*$ be the functional on $H$ such that
 $h_j^*(h_k)=\delta_{j,k}$. We extend $h_j^*$ to the whole
 $L^2(\Sigma)$ by setting $h_j^*=0$ on $H^\perp$. Then $h_j^*\in
 L^2(\Sigma)$ and the assumption implies that
 \[
 \big\|\sum_j\b_jh_j^*\big\|_2\le c
 \big(\sum_j|\b_j|^2\big)^{1/2}\,,\quad\forall\;\b_j\in\com.
 \]
 Now let $(a_j^*)\subset Y^*$ be a finite sequence. Applying i) to
 $Y^*$ and $(h_j^*)$ we obtain \be \big|\sum_j \la a_j^*,\;a_j\ra\big|
 &=&\big|\la \sum_jh_j^*a_j^*,\;\sum_jh_ja_j\ra\big|\\
 &\le& \big\|\sum_jh_j^*a_j^*\big\|_{L^2(\Sigma;Y^*)}\,
 \big\|\sum_jh_ja_j\big\|_{L^2(\Sigma;Y)}\\
 &\le& c' \big\|\sum_j\e_ja_j^*\big\|_{\Rad(Y^*)}\,
 \big\|\sum_jh_ja_j\big\|_{L^2(\Sigma;Y)}\,.  \ee Taking the supremum
 over $(a_j^*)\subset Y^*$ such that $\big\|
 \sum\e_ja_j^*\big\|_{\Rad(Y^*)}\le 1$, we get the assertion.
\end{proof}

Now we proceed to the proof of Theorem \ref{rubioqp}. It is divided into several steps.

\paragraph{The singular integral operator~$T$.}

Let $(I_j)_j$ be a family of disjoint finite intervals and $\psi$ be a
Schwartz function as in Sections~\ref{sec:dyadic-decomposition}
and~\ref{sec:LPR-lattices}. We keep the notation introduced there. We
now set up an appropriate singular integral operator corresponding to
\eqref{a}. It suffices to consider the family $(I_{j,k}^a)_{j,k}$,
$(I_{j,k}^b)_{j,k}$ being treated similarly. Henceforth, we shall
denote $I_{j,k}^a$ simply by $I_{j,k}$. Let $c_{j,k}=a_{j,k} +
2^{k-1}$ for $1\le k\le n_j$. Note that $c_{j,k}$ is the centre of
$I_{j,k}$ if $k<n_j$ and of $\tilde I_{j,k}$ if $k=n_j$.  Define
 \[
 \psi_{j,k}(x)=2^k\exp(2\pi\i c_{j,k}\,x)\,\psi(2^kx)
 \]
 so that the Fourier transform of $\psi_{j,k}$ is adapted to
 $I_{j,k}$, i.e.  \beq\label{fourier (jk)} \chi_{I_{j,k}}\leq \wh
 \psi_{j,k}\leq \chi_{2I_{j,k}}\ \mbox{for}\;
 k<n_j\quad\mbox{and}\quad \chi_{\tilde I_{j,n_j}}\leq \wh
 \psi_{j,n_j}\leq \chi_{2\tilde I_{j,n_j}}\,.  \eeq We should
 emphasise that our choice of $c_{j,k}$ is different from that of
 Rubio de Francia (in~\cite{R1985}) which is $c_{j,k}=n_{j,k}\,2^k$
 for some integer $n_{j,k}$. Rubio de Francia's choice makes his
 calculations easier than ours in the scalar-valued case. The sole
 reason for our choice of $c_{j,k}$ is that $c_{j,k}$ splits into a
 sum of two terms depending on $j$ and $k$ separately. Namely,
 $c_{j,k}=a_j -2+ 2^k+2^{k-1}$.  By \eqref{fourier (jk)},
 \[
 S_{I_{j,k}}f=S_{I_{j,k}}\psi_{j,k}*f.
 \]
 We then deduce, {\added by the splitting property and
   Remark~\ref{RboundedRem},}
 \[
 \big\|\sum_{j,k} \e_j\e'_kS_{I_{j,k}}f\big\|_{p}
 \le c_p\,\big\|\sum_{j,k} \e_j\e'_k\psi_{j,k}*f\big\|_{p}\,.
 \]
 Now write \be \psi_{j,k}*f(x)
 &=&\int 2^k\psi(2^k(x-y))\exp(2\pi\i c_{j,k}(x-y))f(y)dy\\
 &=&\exp(2\pi\i c_{j,k}\,x)
 \int 2^k\psi(2^k(x-y))\exp(-2\pi\i c_{j,k}\,y)f(y)dy\\
 &=& \exp(2\pi\i c_{j,k}\,x)\int K_{j,k}(x,\,y)f(y)dy, \ee where
 \beq\label{K} K_{j,k}(x,\,y)= 2^k\psi(2^k(x-y))\exp(-2\pi\i
 c_{j,k}\,y).  \eeq Using the splitting property of the $c_{j,k}$
 mentioned previously {\added and the contraction principle}, for
 every $x\in\real$ we have \be &&\big\|\sum_{j,k}
 \e_j\e'_k\psi_{j,k}*f(x)
 \big\|_{\Rad_2(X)}\\
 &&~~ = \big\|\sum_{j,k} \e_j\e'_k\exp(2\pi\i c_{j,k}\,x)
 \int K_{j,k}(x,\,y)f(y)dy \big\|_{\Rad_2(X)}\\
 &&~~ \sim \big\|\sum_{j,k} \e_j\e'_k \int K_{j,k}(x,\,y)f(y)dy
 \big\|_{\Rad_2(X)}\,.  \ee Thus we are led to introducing the
 vector-valued kernel $K$: \beq\label{K1} K(x,\,y) =\sum_{j,k}
 \e_j\e'_kK_{j,k}(x,\,y)\in L^2(\O),\quad x,\, y\in\real.  \eeq $K$ is
 also viewed as a kernel taking values in $B(X, \Rad_2(X))$ by
 multiplication. Let $T$ be the associated singular integral operator:
 \[
 T(f)(x)=\int K(x,\,y)f(y)dy,\quad f\in L^p(\real; X).
 \]
By the discussion above, inequality \eqref{a} is reduced to the
boundedness of $T$ from $L^p(\real; X)$ to $L^p(\real; \Rad_2(X))$:
 \beq\label{T1}
 \big\|T(f)\big\|_p\le c_p\, \big\|f\big\|_p\,,\quad
 \forall\;f\in L^p(\real; X).
 \eeq

\paragraph{The~$L^q$ boundedness of~$T$.}

We have the following.

\begin{lem}\label{Lq}
 $T$ is bounded
from $L^q(\real; X)$ to $L^q(\real; \Rad_2(X))$.
 \end{lem}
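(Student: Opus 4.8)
The plan is to deduce the $L^q$-boundedness of $T$ from the $\hbox{LPR}_q$ property through Lemma~\ref{LPRqsub}, exploiting the frequency localisation of the bumps $\psi_{j,k}$ and the fact, built into their construction, that their centres split as $c_{j,k}=a_j+(3\cdot2^{k-1}-2)$.

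The construction of $T$ already yields $\|T(f)\|_{L^q(\real;\Rad_2(X))}\sim\big\|\sum_{j,k}\e_j\e'_k\,\psi_{j,k}*f\big\|_{L^q(\real;\Rad_2(X))}$ for $f\in L^q(\real;X)$, so it suffices to bound the right-hand side by $\|f\|_{L^q(\real;X)}$. Since $\chi_{I_{j,k}}\le\wh\psi_{j,k}\le\chi_{2I_{j,k}}$, I would write $\wh\psi_{j,k}=\chi_{I_{j,k}}+(\wh\psi_{j,k}-\chi_{I_{j,k}})$; the difference is supported in the two ``wings'' of $2I_{j,k}$ adjacent to $I_{j,k}$, and inspection of the explicit endpoints shows that the left wing is contained in $\overline{I_{j,k-1}}$ and the right wing in $\overline{I_{j,k+1}}$. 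Hence
\[
\psi_{j,k}*f = S_{I_{j,k}}f \;+\; g^-_{j,k}(D)\bigl(S_{I_{j,k-1}}f\bigr)\;+\;g^+_{j,k}(D)\bigl(S_{I_{j,k+1}}f\bigr),
\]
where $g^\pm_{j,k}$ is the Fourier multiplier with symbol $g^\pm\bigl((\cdot-c_{j,k})/2^k\bigr)$ for a fixed $g^\pm$ in the Mikhlin class obtained by truncating $\wh\psi$ (with minor modifications at $k=1$ and $k=n_j$, handled as in Section~\ref{sec:dyadic-decomposition} by passing to the intervals $\tilde I_{j,n_j}$).

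It then remains to bound each of the three resulting sums in $L^q(\real;\Rad_2(X))$, say $\sum_{j,k}\e_j\e'_k g^-_{j,k}(D)(S_{I_{j,k-1}}f)$. Writing $M_c h(x)=e^{2\pi\i cx}h(x)$, one has $g^-_{j,k}(D)=M_{a_j}\,\tilde g^-_k(D)\,M_{-a_j}$ and $S_{I_{j,k-1}}=M_{a_j}\,S_{I_{j,k-1}-a_j}\,M_{-a_j}$, with $\tilde g^-_k(D)$ and $I_{j,k-1}-a_j$ depending on $k$ only. Pulling the $M_{a_j}$ out of the Rademacher sum over $j$ -- its unimodular pointwise values being absorbed by the contraction principle, exactly as in the identification of $T$ -- reduces this sum, up to equivalence of $L^q(\real;\Rad_2(X))$-norms, to $\sum_k\e'_k\,\tilde g^-_k(D)\,S_{I_{j,k-1}-a_j}(F)$ with $F=\sum_j\e_j e^{-2\pi\i a_j(\cdot)}f\in L^q(\real;\Rad(X))$, i.e.\ to a single Rademacher sum over $k$. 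Since $X$, and therefore $\Rad(X)$, is UMD and $\{\tilde g^-_k\}_k$ is a bounded subset of the Mikhlin class, the family $\{\tilde g^-_k(D)\}_k$ is $R$-bounded on $L^q(\real;\Rad(X))$ (the $R$-bounded form of Bourgain's multiplier theorem), so the sum is dominated by $\big\|\sum_k\e'_k S_{I_{j,k-1}-a_j}(F)\big\|$, which after re-indexing $k$ and undoing the modulations equals, up to a constant, $\big\|\sum_{j,k}\e_j\e'_k S_{I_{j,k}}f\big\|_{L^q(\real;\Rad_2(X))}$. The $g^+$-term is treated identically and the $S_{I_{j,k}}$-term is already of that shape; and $\big\|\sum_{j,k}\e_j\e'_k S_{I_{j,k}}f\big\|_{L^q}\le c\,\|f\|_{L^q(\real;X)}$ by Lemma~\ref{LPRqsub}, the $I_{j,k}$ being disjoint subintervals of $I_j$ whose position relative to $a_j$ is independent of $j$. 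Combining the three estimates proves the lemma.

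The step I expect to be the main obstacle is the conversion of the smooth pieces $g^\pm_{j,k}(D)f$ into genuine Riesz projections: this requires the full $R$-boundedness -- not merely the uniform boundedness -- of a family of dilated and modulated Mikhlin multipliers, together with a careful disentangling of the modulations; it is precisely to make this possible that the centres $c_{j,k}$ were chosen to split additively into a $j$-part and a $k$-part.
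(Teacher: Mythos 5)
Your proposal is correct in spirit and uses the same two key ingredients as the paper — the $R$-bounded form of the vector-valued multiplier theorem on the UMD space $\Rad_2(X)$ and Lemma~\ref{LPRqsub} via the additive splitting $c_{j,k}=a_j+\mathrm{const}(k)$ — but it arrives there by a genuinely different decomposition. The paper factors $\psi_{j,k}*f=\psi_{j,k}*(S_{2I_{j,k}}f)$, then removes the convolution family $\{\psi_{j,k}*\}_{j,k}$ in one stroke via Weis's operator-valued Fourier multiplier theorem applied to the smooth $B(\Rad_2(X))$-valued symbol $N$, and finally controls $\big\|\sum_{j,k}\e_j\e'_kS_{2I_{j,k}}f\big\|_q$ by splitting $\{2I_{j,k}\}$ into three subfamilies of disjoint intervals and applying Lemma~\ref{LPRqsub} to each. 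You instead split $\wh\psi_{j,k}=\chi_{I_{j,k}}+(\wh\psi_{j,k}-\chi_{I_{j,k}})$, identify the wings of $2I_{j,k}$ with the neighbouring dyadic intervals $I_{j,k\pm1}$, and appeal to $R$-boundedness of the scalar Mikhlin family $\{\tilde g^\pm_k(D)\}_k$ on $L^q(\real;\Rad(X))$. Your geometric observation that the wings land exactly in $I_{j,k-1}$ and $I_{j,k+1}$ is correct for $2\le k\le n_j-1$ and is a nice way to avoid the operator-valued Weis machinery in favour of scalar multipliers; what it buys is a more concrete and elementary multiplier step, at the cost of more bookkeeping.

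There is one genuine gap you gloss over. The identification of the left wing with $I_{j,k-1}$ fails at $k=1$: the left wing of $2I_{j,1}$ is $(a_j-1,a_j]$, which lies entirely outside $I_j$ and is not one of the $I_{j,k'}$, so your displayed decomposition of $\psi_{j,1}*f$ simply does not hold and your reduction to Lemma~\ref{LPRqsub} does not apply to that term. Your remark about ``minor modifications \ldots\ handled \ldots\ by passing to the intervals $\tilde I_{j,n_j}$'' only addresses the $k=n_j$ endpoint, not $k=1$. The $k=1$ term can in fact be repaired — the intervals $\{(a_j-1,a_j]\}_j$ are pairwise disjoint (consecutive $a_j$'s differ by at least $4$), so after the usual modulation/contraction step one can estimate $\big\|\sum_j\e_j\e'_1\,g^-_{j,1}(D)(S_{(a_j-1,a_j]}f)\big\|_q$ directly by the $\hbox{LPR}_q$ property of $X$ applied to that disjoint family, bypassing Lemma~\ref{LPRqsub} — but this extra argument must be supplied explicitly. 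As written your proof is incomplete at $k=1$.
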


\pf Let $f\in L^q(\real; X)$. By the previous discussion we have
 \[
 \|Tf\|_q
 \sim\big\|\sum_{j,k}\e_j\e'_k\psi_{j,k}*f\big\|_q.
 \]
By \eqref{fourier (jk)}
 \[
 \sum_{j,k}\e_j\e'_k\psi_{j,k}*f=
 \sum_{j,k}\e_j\e'_k\psi_{j,k}*(S_{2I_{j,k}}f).
 \]
Note that for each $j$ the last interval $I_{j,n_j}$ above should be the dyadic interval $\wt I_{j,n_j}$.   We claim that
 \[
 \big\|\sum_{j,k}\e_j\e'_k\psi_{j,k}*g_{j,k}\big\|_q
 \le c\big\|\sum_{j,k}\e_j\e'_k g_{j,k}\big\|_q,\quad
 \forall\; g_{j,k}\in L^q(\real; X).
 \]
Indeed, using the splitting property of the $c_{j,k}$ we have
 \[
 \big\|\sum_{j,k}\e_j\e'_k\psi_{j,k}*g_{j,k}\big\|_q
 \sim \big\|\sum_{j,k}\e_j\e'_k\wt\psi_{j,k}*\wt g_{j,k}\big\|_q,
 \]
where
 \[
 \wt\psi_{j,k}(x)=2^k\psi(2^kx)\quad\mbox{and}\quad
 \wt g_{j,k}(x)=\exp(-2\pi\i c_{j,k}\,x)g_{j,k}(x).
 \]
For $x\in\real$ define the operator $N(x): \Rad_2(X)\to  \Rad_2(X)$ by
 \[
 N(x)\big(\sum_{j,k}\e_j\e'_ka_{j,k}\big)=\sum_{j,k}\e_j\e'_k\wt\psi_{j,k}(x)a_{j,k}.
 \]
It is obvious that $N: \real\to B(\Rad_2(X))$ is a smooth function and
 \[
  \sum_{j,k}\e_j\e'_k\wt\psi_{j,k}*\wt g_{j,k}=N*\wt g \quad\mbox{with}\quad
  \wt g=\sum_{j,k}\e_j\e'_k\wt g_{j,k}.
  \]
  It is also easy to check that $N$ satisfies
  \cite[Theorem~3.4]{weis}.  Since $\Rad_2(X)$ is a UMD space, it
  follows from~\cite{weis} that the convolution operator with $N$ is
  bounded on $L^q(\real; \Rad_2(X))$.  Thus
 \[
 \big\|\sum_{j,k}\e_j\e'_k\wt\psi_{j,k}*\wt g_{j,k}\big\|_q
 \le c \big\|\sum_{j,k}\e_j\e'_k\wt g_{j,k}\big\|_q.
 \]
Using again the splitting property of the $c_{j,k}$ and going back to the $g_{j,k}$, we prove the claim. Consequently, we have
 \[
 \|T (f) \|_q\le c \big\|\sum_{j,k}\e_j\e'_kS_{2I_{j,k}}f\big\|_q.
 \]
We split the family $\big\{2I_{j,k}\big\}$ into three subfamilies $\big\{2I_{j,3k+\el}\big\}$ of disjoint intervals with $\el\in\{0, 1, 2\}$.
Accordingly, we have
 \[
 \|T (f) \|_q\le c \sum_{\el=0}^2\big\|\sum_{j,k}\e_j\e'_kS_{2I_{j,3k+\el}}f\big\|_q.
 \]
Each subfamily $\big\{2I_{j,3k+\el}\big\}_{j,k}$ satisfies the condition of Lemma~\ref{LPRqsub}. Hence
 \[
 \big\|\sum_{j,k}\e_j\e'_kS_{2I_{j,3k+\el}}f\big\|_q\le c \|f\|_q.
 \]
 Thus the lemma is proved.\cqd

 \paragraph{An estimate on the kernel $K$.} This subsection contains the
 key estimate on the kernel $K$ defined in \eqref{K1}. Fix
 $x,\,z\in\real$ and an integer $m\geq 1$. Let
 \[
 I_m(x,z)=\big\{y\in\real\;:\; 2^m|x-z|<|y-z|
 \leq 2^{m+1}|x-z|\big\}.
 \]

\begin{lem}\label{kernel estimate}
  If~$X^*$ is of cotype~$2$ and if~$(\l_{j,k})\subset X^*$, then \be
  \int_{I_m(x,z)}\big\|
  \sum_{j,k}[K_{j,k}(x,\,y)-K_{j,k}(z,\,y)]\l_{j,k} \big\|_{X^*}^2dy
  \le
  c\,\frac{\big\|\sum_{j,k}\e_j\e'_k\l_{j,k}\big\|^2_{\Rad_2(X^*)}}
  {2^{5m/3}|x-z|}\,.  \ee
\end{lem}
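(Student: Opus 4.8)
The plan is to reduce, via the splitting property of the frequencies, to a purely scalar almost‑orthogonality estimate, and then to transfer that estimate to the $X^{*}$‑valued setting using the cotype~$2$ of $X^{*}$ together with part~i) of Lemma~\ref{cotype2-riesz}.

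First I would record the consequence of the splitting $c_{j,k}=d_j+e_k$, where $d_j=a_j-2$ and $e_k=2^{k}+2^{k-1}$:
\[
K_{j,k}(x,y)-K_{j,k}(z,y)=e^{-2\pi\i d_j y}\,e^{-2\pi\i e_k y}\,\Delta_k(y),\qquad
\Delta_k(y):=2^{k}\bigl[\psi(2^{k}(x-y))-\psi(2^{k}(z-y))\bigr],
\]
so the integrand of the lemma is $\bigl\|\sum_{j,k}e^{-2\pi\i(d_j+e_k)y}\Delta_k(y)\,\lambda_{j,k}\bigr\|_{X^{*}}^{2}$. Since $\psi$ is Schwartz and $y\in I_m(x,z)$ forces $|y-x|\sim|y-z|\sim 2^{m}|x-z|$, the mean value theorem together with the rapid decay of $\psi$ and its derivatives gives, for all $r$ and $N$,
\[
|\partial_y^{r}\Delta_k(y)|\ \lesssim_{r,N}\ 2^{(r+1)k}\,\min\!\bigl(1,2^{k}|x-z|\bigr)\,\bigl(1+2^{k+m}|x-z|\bigr)^{-N},\qquad y\in I_m(x,z).
\]

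Next comes the transfer to a scalar estimate. The coefficient $e^{-2\pi\i(d_j+e_k)y}\Delta_k(y)$ is a unimodular character in $j$ times a function in $k$. Stripping the characters by the contraction principle and invoking the cotype~$2$ of $X^{*}$ through part~i) of Lemma~\ref{cotype2-riesz} — applied once for each of the two factor‑systems, the second one bringing in the auxiliary Rademacher copy $(\varepsilon_k')$ — reduces the lemma to the scalar Bessel inequality
\[
\int_{I_m(x,z)}\Bigl|\sum_{j,k}\alpha_{j,k}\,e^{-2\pi\i(d_j+e_k)y}\Delta_k(y)\Bigr|^{2}dy\ \le\ \frac{c}{2^{5m/3}\,|x-z|}\sum_{j,k}|\alpha_{j,k}|^{2},\qquad (\alpha_{j,k})\subset\com .
\]
It is exactly the product form of the coefficients in $j$ and $k$ that makes the right‑hand side of the lemma come out as the iterated norm $\Rad_2(X^{*})$ rather than the single Rademacher norm $\Rad(X^{*})$ — for a general Banach space the two are not comparable, so one cannot collapse the pair of indices — and keeping track of this is one of the delicate points.

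To prove the scalar inequality I would expand its left side as $\sum M_{(j,k),(j',k')}\,\alpha_{j,k}\overline{\alpha_{j',k'}}$ with $M_{(j,k),(j',k')}=\int_{I_m(x,z)}e^{-2\pi\i(c_{j,k}-c_{j',k'})y}\,\Delta_k(y)\,\overline{\Delta_{k'}(y)}\,dy$ and apply Schur's test, bounding each entry in two ways: trivially by $\int_{I_m}|\Delta_k|\,|\Delta_{k'}|$, using the size bound above and $|I_m|\le 2^{m+1}|x-z|$; and by $r$‑fold integration by parts in $y$, gaining $|c_{j,k}-c_{j',k'}|^{-r}$ at the cost of $r$ derivatives of $\Delta_k\overline{\Delta_{k'}}$ (the boundary terms being absorbed by the same size bounds). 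The well‑distributedness of $(I^{u}_{j,k})$ from part~ii) of Remark~\ref{DyadicLattice} — that the centres $c_{j,k}$ are separated on the scale of their dyadic length, apart from a bounded number of neighbours — ensures that for each fixed $(j,k)$, taking the better of the two bounds and summing over $(j',k')$ (first within the same $j$‑block, where the $e_k$ are lacunary, then over the remaining blocks, whose frequencies lie in the disjoint intervals $I_{j'}$) produces the stated bound; the geometric factor $(1+2^{k+m}|x-z|)^{-N}$ handles the interaction of distinct scales $k\ne k'$. The precise exponent $5m/3$ is merely what falls out of balancing the trivial against the oscillatory bound and the measure of $I_m$ — any exponent strictly larger than $1$ would serve the Hörmander‑type argument in which this lemma is used. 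This bi‑indexed almost‑orthogonality computation — exploiting simultaneously the oscillation from the $j$‑frequencies (separated only by $\ge4$, since $|I_j|\ge 4$) and from the lacunary $k$‑frequencies, keeping every constant independent of the number of intervals, and still extracting the full rate, with the cross‑scale terms $k\ne k'$ (where the size bounds must be traded off against the oscillation) being the heaviest part — is the main obstacle.
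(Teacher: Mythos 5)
Your approach diverges genuinely from the paper's, and the divergence opens a gap at exactly the point you flag as ``one of the delicate points.'' You propose to first prove a purely scalar Bessel inequality for the bi-indexed family $e^{-2\pi\i(d_j+e_k)y}\Delta_k(y)$ via Schur's test, and then transfer it to $X^*$-valued coefficients by applying Lemma~\ref{cotype2-riesz}~i) ``once for each of the two factor-systems'' so as to land on the iterated norm $\Rad_2(X^*)$. But Lemma~\ref{cotype2-riesz}~i) takes a Bessel inequality for a family $(h_\nu)$ on a \emph{single} measure space $\Sigma$ and produces the bound $\|\sum_\nu h_\nu a_\nu\|_{L^2(\Sigma;Y)}\le c'\|\sum_\nu\e_\nu a_\nu\|_{\Rad(Y)}$ with \emph{one} Rademacher family over the index $\nu$. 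If you feed it the doubly-indexed family $(h_j g_k)_{j,k}$ over $\Sigma=I_m(x,z)$, you get the single norm $\|\sum_{j,k}\e_{j,k}\lambda_{j,k}\|_{\Rad(X^*)}$, which for a general Banach space is \emph{not} comparable to $\|\sum_{j,k}\e_j\e'_k\lambda_{j,k}\|_{\Rad_2(X^*)}$ (you correctly note this obstruction yourself). To iterate the lemma once in $j$ and once in $k$ --- as one does to produce $\Rad_2$ --- one needs the two factor-systems to live over a \emph{product} of measure spaces so that Fubini can separate the two applications; here $h_j(y)=e^{-2\pi\i d_j y}$ and $g_k(y)=e^{-2\pi\i e_k y}\Delta_k(y)$ both depend on the same variable $y$, so the iteration as described does not go through. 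Your Schur-test scalar inequality also has to cope with the cross-scale terms $k\ne k'$ (where neither $\Delta_k$ nor $\Delta_{k'}$ has a clean gap structure); this is heavier than anything the paper actually needs.

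The paper's proof sidesteps both issues. It pulls $\mu_k=\|\sum_j\e_j\lambda_{j,k}\|_{\Rad(X^*)}$ out as a scalar weight, normalizes the $j$-sum into $q_k(y)=\mu_k^{-1}\sum_j\lambda_{j,k}e^{-2\pi\i c_{j,k}y}$, and then estimates the $k$-sum by triangle inequality plus Cauchy--Schwarz: the factor $\sum_k\mu_k^2\le c$ comes from the cotype~$2$ of $\Rad(X^*)$ applied to the $k$-index, and this is precisely what turns one Rademacher layer into the outer one of $\Rad_2$. Orthogonality is then used \emph{only in $j$ for each fixed $k$}, where the frequencies $c_{j,k}$ are cleanly $2^k$-separated, so Zygmund's Dirichlet-series inequality plus a single application of Lemma~\ref{cotype2-riesz}~i) bounds $\int_{I_m}\|q_k\|_{X^*}^2$; no cross-$k$ orthogonality is invoked at all, the decay of $\psi$ across scales being enough for the Cauchy--Schwarz step. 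If you wish to salvage your argument, the essential repair is to replace the ``double Bessel'' transfer by the paper's weight-and-normalize device in $k$; the almost-orthogonality in $j$ then reduces to the lacunary case the paper already handles.
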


\pf Let $(\l_{j,k})\subset X^*$ such that
 \[\big\|\sum_{j,k}\e_j\e'_k\l_{j,k}\big\|_{\Rad_2(X^*)}\le1.\]
By the definition
of $K_{j,k}$ in \eqref{K}, we have
 \[
 \sum_{j,k}[K_{j,k}(x,\,y)-K_{j,k}(z,\,y)]\l_{j,k}
 =\sum_{k}\mu_k2^{k}\,[\psi(2^k(x-y))-\psi(2^k(z-y))]\,q_k(y)\,,
 \]
where
 \[
 \mu_k=\big\|\sum_{j}\e_j\l_{j,k}\big\|_{\Rad(X^*)}\quad\textrm{and}\quad
 q_k(y)=\mu_k^{-1}\sum_{j}\l_{j, k}\exp(-2\pi\i c_{j,k}\,y).
 \]
Since $\Rad(X^*)$ is of cotype 2,
 \[
 \sum_k\mu_k^2\le c
 \big\|\sum_k\e'_k \sum_{j}\e_j\l_{j,k}\big\|^2_{\Rad(\Rad(X^*))}
 \le c.
 \]
 Thus \be &&\int_{I_m(x,z)}\big\|
 \sum_{j,k}[K_{j,k}(x,\,y)-K_{j,k}(z,\,y)]\l_{j,k}
 \big\|_{X^*}^2dy\\
 &&~~ \le \sum_{k}2^{2k}\sup_{y\in
   I_m(x,z)}|\psi(2^k(x-y))-\psi(2^k(z-y))|^2\int_{I_m(x,z)}\|q_k(y)\|_{X^*}^2dy.
 \ee Note that for fixed $k$ \beq\label{gap} \left|
   c_{j,k}-c_{j^{\prime},k} \right|\geq 2^k,\quad \forall\; j\neq
 j^{\prime}\,.  \eeq Now we appeal to the following classical
 inequality on Dirichlet series with small gaps. Let $(\g_j)$ be a
 finite sequence of real numbers such that
 \[
 \g_{j+1}-\g_j\ge 1,\quad\forall\;j\ge1.
 \]
 Then, by~\cite[Ch.~V, Theorem~9.9]{Zydmund-TS}, for any interval
 $I\subset\real$ and any sequence $(\a_j)\subset\com$
 \[
 \int_I\big|\sum_j\a_j\exp(2\pi\i \g_j\,y)\big|^2dy
 \le c\max(|I|, \;1)\sum_j|\a_j|^2\,,
 \]
 where $c$ is an absolute constant. Applying this to the function
 $q_k$, using Lemma \ref{cotype2-riesz} and \eqref{gap}, we find \be
 \int_{I_m(x,z)}\|q_k\|_{X^*}^2dy &\le&
 c\,2^{-k}\max(2^k|I_m(x,z)|,\;1)\,
 \mu_k^{-2}\big\|\sum_{j}\e_j\l_{j,k}\big\|^2_{\Rad(X^*)}\\
 &=& c\,\max(2^m|x-z|, \;2^{-k})\,.  \ee Let
 \begin{multline*}
   k_0=\min\big\{k\in\nat:\; 2^{-k}\leq 2^m|x-z|\big\} \ \
   \text{and}\\ k_1=\min\big\{k\in\nat:\; 2^{-k}\leq
   2^{2m/3}|x-z|\big\}.
 \end{multline*}
 Note that $k_0\leq k_1$. For $k\leq k_1$ we have \be
 |\psi(2^k(x-y))-\psi(2^k(z-y))|\leq c\,2^k|x-z|.  \ee {\added Recall
   that~$\psi$ is a Schwartz function, in particular~$\left|
     x\right|^2 \left| \psi(x) \right| \leq c$.  Thus, for $k\geq
   k_1$, we have} \be |\psi(2^k(x-y))-\psi(2^k(z-y))|\leq
 c\,2^{-2k}|y-z|^{-2} \leq c\,2^{-2k-2m}|x-z|^{-2}\,, \ee {\added
   where the second estimate comes from the fact that~$y \in I_{m}(x,
   z)$.}  Let
\[
\alpha_k=2^{2k}\sup_{y\in
I_m(x,z)}|\psi(2^k(x-y))-\psi(2^k(z-y))|^2\int_{I_m(x,z)}\|q_k(y)\|_X^2dy.
\]
Combining the preceding inequalities, we
deduce the following estimates on $\a_k$:
 \be
 \a_k
 &\le&
 c\,2^{2k}2^{2k}|x-z|^22^{-k}=c\,2^{3k}|x-z|^2
 \quad\mbox{for}\quad k\le k_0;\\
 \a_k
 &\le& c\,2^{2k}2^{2k}|x-z|^22^m|x-z|=c\,2^{4k}2^m|x-z|^3
 \quad\mbox{for}\quad k_0< k< k_1;\\
 \a_k
 &\le&
 c\,2^{2k}(2^{k+m}|x-z|)^{-4}2^m|x-z|=c\,2^{-2k}2^{-3m}|x-z|^{-3}
 \quad\mbox{for}\quad k\ge k_1.
 \ee
Therefore,
 \be
 &&\int_{I_m(x,z)}\big\|
 \sum_{j,k}[K_{j,k}(x,\,y)-K_{j,k}(z,\,y)]\l_{j,k}
 \big\|_{X^*}^2dy\\
 &&~~\le\sum_{1\leq k\leq k_0}\a_k
 +\sum_{k_0< k<k_1}\a_k+\sum_{k\geq k_1}\a_k \\
 &&~~\le
 c\,\big[2^{3k_0}|x-z|^2+2^{4k_1}2^m|x-z|^3+2^{-2k_1}2^{-3m}|x-z|^{-3}\big]\\
 &&~~ \le c\,2^{-5m/3}|x-z|^{-1}\, .
 \ee
This is the desired estimate for the kernel $K$. \cqd

\paragraph{The $L^\8$-{\rm BMO} boundedness.} Recall that $T$ is the
singular integral operator associated with the kernel $K$.

\begin{lem}\label{Linfty-BMO}
 The operator $T$ is
bounded from $L^\infty(\real;X)$ to ${\rm
BMO}(\real;\Rad_2(X))$.
\end{lem}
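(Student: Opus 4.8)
The plan is to run the classical Calderón--Zygmund $L^\infty\to{\rm BMO}$ argument for the singular integral operator $T$, using the $L^q$-boundedness of Lemma~\ref{Lq} for the local part and the Hörmander-type kernel bound of Lemma~\ref{kernel estimate} for the tail. Fix $f\in L^\infty(\real;X)$ with $\|f\|_\infty\le1$; since all the sums defining $K$ are finite, $Tf$ is given by an absolutely convergent integral and no convergence issue arises. Fix an interval $B=B(x_0,r)\subset\real$, write $f_1=f\chi_{2B}$ and $f_2=f\chi_{(2B)^c}$, and take the subtraction constant $c_B=(Tf_2)(x_0)\in\Rad_2(X)$. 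Recalling that the ${\rm BMO}$ seminorm is, up to a factor $2$, the supremum over $B$ of $\inf_c\frac1{|B|}\int_B\|(Tf)(x)-c\|_{\Rad_2(X)}\,dx$, it suffices to bound $\frac1{|B|}\int_B\|(Tf)(x)-c_B\|_{\Rad_2(X)}\,dx$ by an absolute constant, uniformly in $B$.

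For the local term $Tf_1$ I would apply Jensen's inequality on the probability space $(B,|B|^{-1}dx)$ (here $q\ge2\ge1$) followed by Lemma~\ref{Lq}, obtaining
$$
\frac1{|B|}\int_B\|Tf_1\|_{\Rad_2(X)}\,dx\le\Big(\frac1{|B|}\int_B\|Tf_1\|_{\Rad_2(X)}^q\,dx\Big)^{1/q}\le|B|^{-1/q}\|Tf_1\|_q\le c\,|B|^{-1/q}\|f_1\|_q\le c\,\Big(\frac{|2B|}{|B|}\Big)^{1/q}=c\,,
$$
since $\|f\|_\infty\le1$ and $|2B|=2|B|$.

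The tail term is the substantive one. For $x\in B$ we have $(Tf_2)(x)-c_B=\int_{(2B)^c}[K(x,y)-K(x_0,y)]f(y)\,dy$, and, because $|x-x_0|<r$ while $|y-x_0|>2r$ on $(2B)^c$, the set $(2B)^c$ is covered by the dyadic annuli $I_m(x,x_0)$, $m\ge1$. To estimate the $\Rad_2(X)$-norm of this integral pointwise in $x$, I would dualize against an element $w=\sum_{j,k}\e_j\e'_k\l_{j,k}$ of $\Rad_2(X^*)$ with $\|w\|_{\Rad_2(X^*)}\le1$: this is legitimate because $X$ is UMD, hence K-convex, so that the Rademacher projection is bounded and $\Rad_2(X)^*=\Rad_2(X^*)$ up to equivalence of norms; and because $X$ is of type $2$, its dual $X^*$ is of cotype $2$, so Lemma~\ref{kernel estimate} applies. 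Since the pairing of $\Rad_2(X)$ with $\Rad_2(X^*)$ diagonalises the Rademacher variables, this gives
$$
\big|\big\langle (Tf_2)(x)-c_B,\,w\big\rangle\big|\le\sum_{m\ge1}\int_{I_m(x,x_0)}\|f(y)\|_X\,\Big\|\sum_{j,k}\big[K_{j,k}(x,y)-K_{j,k}(x_0,y)\big]\l_{j,k}\Big\|_{X^*}dy\,.
$$
On each annulus I would apply Cauchy--Schwarz together with $|I_m(x,x_0)|\le c\,2^m|x-x_0|$ and Lemma~\ref{kernel estimate}:
$$
\int_{I_m(x,x_0)}\Big\|\sum_{j,k}\big[K_{j,k}(x,y)-K_{j,k}(x_0,y)\big]\l_{j,k}\Big\|_{X^*}dy\le\big(c\,2^m|x-x_0|\big)^{1/2}\Big(\frac{c}{2^{5m/3}|x-x_0|}\Big)^{1/2}=c\,2^{-m/3}\,,
$$
and summing the geometric series gives $\|(Tf_2)(x)-c_B\|_{\Rad_2(X)}\le c$ for every $x\in B$, hence $\frac1{|B|}\int_B\|(Tf_2)(x)-c_B\|_{\Rad_2(X)}\,dx\le c$. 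Combining the two estimates and taking the supremum over all intervals $B$ yields $\|Tf\|_{{\rm BMO}(\real;\Rad_2(X))}\le c\|f\|_\infty$.

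I expect the only delicate point to be the passage from the $X^*$-valued kernel bound of Lemma~\ref{kernel estimate} to the pointwise $\Rad_2(X)$-norm bound on $(Tf_2)(x)-c_B$, i.e. correctly invoking the duality $\Rad_2(X)^*=\Rad_2(X^*)$ (which rests on the K-convexity of $X$, a consequence of UMD) and the cotype-$2$ hypothesis on $X^*$; everything else is routine Calderón--Zygmund bookkeeping. The exponent $5m/3$ of Lemma~\ref{kernel estimate} is precisely what makes this work: once Cauchy--Schwarz costs the factor $|I_m|^{1/2}\sim2^{m/2}|x-x_0|^{1/2}$, the surviving power is $2^{m/2-5m/6}=2^{-m/3}$, which is summable in $m$.
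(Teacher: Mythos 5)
Your proposal is correct and follows essentially the same route as the paper's own proof: the paper also decomposes into a local part bounded by H\"older and Lemma~\ref{Lq}, and an annular tail estimated by dualizing against $\Rad_2(X^*)$ and invoking Lemma~\ref{kernel estimate} on the dyadic annuli $I_m(x,z)$, with the same $2^{-m/3}$ geometric series. Your explicit remark that the duality $\Rad_2(X)^*\cong\Rad_2(X^*)$ rests on $K$-convexity (from UMD) and that type $2$ of $X$ gives cotype $2$ of $X^*$ spells out steps the paper leaves implicit, but the substance is identical.
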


\n {\bf Proof}.  {\added Recall that $$ \left\| g \right\|_{\rm
    BMO(\real; X)} \leq 2\, \sup_{I \subseteq \real} \frac 1{\left| I
    \right|} \int_I \left\| g(x) - b_I \right\|_X\, dx, $$
  where~$\left\{b_I\right\}_{I \subseteq \real} \subseteq X$ is any
  family of elements of~$X$ assigned to each interval~$I \subseteq
  \real$.}  Fix a function $f\in L^\infty(\real; X)$ with
$\|f\|_{\8}\le 1$ and an interval $I\subset \real$. Let $z$ be the
centre of $I$ and let
 \[
 b_I=\int_{(2I)^c}K(z,y)f(y)\,dy.
 \]
 Then{\added , for~$x \in I$,}
 \[
 Tf(x)-b_I
 =\int_{(2I)^c}[K(x,y)-K(z,y)]f(y)\,dy+ \int_{2I}K(x,y)f(y)dy.
 \]
Thus
 \be
 &&\frac1{|I|}\int_I \big\|Tf(x)-b_I\big\|_{\Rad_2(X)}dx\\
 &&~~ \le\frac1{|I|}\int_I \big\|\int_{(2I)^c}[K(x,y)-K(z,y)]f(y)\,dy\big\|_{\Rad_2(X)}dx\\
 &&~~+\frac1{|I|}\int_I \big\|\int_{2I}K(x,y)f(y)dy\big\|_{\Rad_2(X)}dx\\
 &&~~ {\mathop =^{\rm def}}\;A+B.
  \ee
  By Lemma~\ref{Lq} we have
 \[
 B\le |I|^{-1/q}\big\|T(f\chi_{2I})\big\|_q\le c.
 \]
To estimate $A$, fix $x\in I$. Choose $(\l_{j,k})\subset X^*$ such that
 \[
 \big\|\sum_{j,k}\e_j\e'_k\l_{j,k}\big\|_{\Rad_2(X^*)}\le1.
 \]
and
\begin{multline*}
  \big\|\int_{(2I)^c}[K(x,y)-K(z,y)]f(y)\,dy\big\|_{\Rad_2(X)} \\ \sim
  \sum_{j,k}\la\l_{j,k},\;
  \int_{(2I)^c}[K_{j,k}(x,y)-K_{j,k}(z,y)]f(y)\,dy\ra
\end{multline*}
Then by  Lemma \ref{kernel estimate}, we find
 \be
 &&\big\|\int_{(2I)^c}[K(x,y)-K(z,y)]f(y)\,dy\big\|_{\Rad_2(X)}\\
 &&~~ \le \int_{(2I)^c}
 \big\|\sum_{j,k}[K_{j,k}(x,y)-K_{j,k}(z,y)]\l_{j,k}\big\|_{X^*}dy\\
 &&~~ \le  \sum_{m=1}^\infty|I_m(x,z)|^{1/2}\big(\int_{I_m(x,z)}
 \big\|\sum_{j,k}[K_{j,k}(x,y)-K_{j,k}(z,y)]\l_{j,k}\big\|_{X^*}^2dy
 \big)^{1/2}\\
 &&~~\le c\sum_{m=1}^\infty(2^m|x-z|)^{1/2}(2^{5m/3}|x-z|)^{-1/2}\\
 &&~~ \le  \sum_{m=1}^\infty c\,2^{-m/3}\leq c.
 \ee
Therefore, $A\le c$. Thus $T$ is bounded from $L^\infty(\real;X)$ to ${\rm
BMO}(\real;\Rad_2(X))$. \cqd

Combining the result of Lemma~\ref{Linfty-BMO} and Lemma~\ref{Lq} and
applying interpolation (see~\cite{blasco-xu-1989}), we immediately see
that the operator~$T$ is bounded from $L^p(\real; X)$
to $L^p(\real; \Rad_2 (X))$ for every $q<p<\8$. Thus  Theorem~\ref{rubioqp} is proved.

\begin{rk}
  Let $$ T(f)^\sharp(x)=\sup_{x\in I}\frac1{|I|}\, \int_I
  \big\|T(f)(y)- T(f)_I\big\|_{\Rad_2(X)}\,dy $$ and $$
  M_q(f)(x)=\sup_{x\in I} \left( \frac1{|I|}\, \int_I
    \big\|f(y)\big\|^q_{X}\,dy \right)^{\frac 1q}. $$ Under the
  assumption of Theorem~\ref{rubioqp} one can show the following
  pointwise estimate: \[T(f)^\sharp\le c\, M_q(f). \]
\end{rk}


\providecommand{\bysame}{\leavevmode\hbox to3em{\hrulefill}\thinspace}

\end{document}